\newtheorem{theorem}{Theorem}[section]
\newtheorem{lemma}[theorem]{Lemma}
\newtheorem{proposition}{Proposition}[section]
\theoremstyle{definition}
\newtheorem{definition}[theorem]{Definition}
\newtheorem{corollary}{Corollary}[section]
\newtheorem{conjecture}{Conjecture}[section]
\theoremstyle{remark}
\newtheorem{remark}[theorem]{Remark}
\numberwithin{equation}{section}
\begin{document}

\title{The prime pairs are equidistributed among the coset lattice congruence classes}

\author{T. Agama *}
\address{Department of Mathematics, African Institute for Mathematical science, Ghana
}
\email{theophilus@aims.edu.gh/emperordagama@yahoo.com}

\author{M. Bortolamasim **}
\address{ Engineer and Mathematician, Ordine degli Ingegneri della Provincia di Modena, c/o Dipartimento di Ingegneria E. Ferrari, Università di Modena e Reggio Emilia, Via P. Vivarelli 10, 41125 MODENA (Italy)
}
\email{bortolamasim@libero.it.}

\author{A. Tapia ***}
\address{ Ph.D. in Physics, Czech Technical University (Czech Republic)
}
\email{ aortiztapia2013@gmail.com.}

\subjclass[2000]{Primary 54C40, 14E20; Secondary 46E25, 20C20}

\date{\today}


\keywords{omega function; area method; lattice character; coset lattice congruence classes}

\begin{abstract}
In this paper we show that for some constant $c>0$ and for any $A>0$  there exist some $x(A)>0$  such that, If $q\leq (\log x)^{A}$ then we have \begin{align}\Psi_z(x;\mathcal{N}_q(a,b),q)&=\frac{\Theta (z)}{2\phi(q)}x+O\bigg(\frac{x}{e^{c\sqrt{\log x}}}\bigg)\nonumber
\end{align}for $x\geq x(A)$ for some $\Theta(z)>0$. In particular for $q\leq (\log x)^{A}$ for any $A>0$\begin{align}\Psi_z(x;\mathcal{N}_q(a,b),q)\sim \frac{x\mathcal{D}(z)}{2\phi(q)}\nonumber
\end{align}for some constant $\mathcal{D}(z)>0$ and where $\phi(q)=\# \{(a,b):(p_i,p_{i+z})\in \mathcal{N}_q(a,b)\}$. 
\end{abstract}

\maketitle

\section{\textbf{Introduction}}
It is known that there are infinitely many primes in arithmetic progression \cite{hildebrand2005introduction}. That is to say, the quantity \begin{align}\lim \limits_{x\longrightarrow \infty}\# \{p\leq x:p\equiv a\pmod q,~(a,q)=1\}=\infty.\nonumber
\end{align}This is now known as Dirichlet Theorem of primes in arithmetic progression. Even the more potentially informative question arises concerning their distribution among the primitive congruence classes. Again given a fixed $q$ allowed to grow slowly we can claim equidistribution of the primes among the primitive congruence classes. In particular the well-known Siegel-Walfisz theorem states (See \cite{May})\begin{theorem}Let $c>0$ be some constant. For any $A>0$ there exist some $x(A)>0$ such that, If $q\leq (\log x)^{A}$ then \begin{align}\Psi(x;q,a)=\frac{1}{\phi(q)}x+O\bigg(\frac{x}{e^{c\sqrt{\log x}}}\bigg)\nonumber
\end{align} for $x\geq x(A)$, where $\Psi(x;q,a)=\# \{p\leq x: p\equiv a\pmod q,~(a,q)=1\}$
\end{theorem}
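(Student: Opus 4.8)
The plan is to establish the stated estimate by the classical route through Dirichlet characters, the explicit formula, and the Landau--Page zero-free region together with Siegel's theorem; I interpret $\Psi(x;q,a)$ as the von Mangoldt--weighted sum $\sum_{n\le x,\ n\equiv a\,(\mathrm{mod}\ q)}\Lambda(n)$, whose main term is indeed $x/\phi(q)$ (the passage to the unweighted count of primes, whose main term is $\mathrm{li}(x)/\phi(q)$, is a routine partial summation). First I would use the orthogonality relations of the characters modulo $q$ to write
\begin{align}
\Psi(x;q,a)=\frac{1}{\phi(q)}\sum_{\chi\ (\mathrm{mod}\ q)}\bar\chi(a)\,\psi(x,\chi),\qquad \psi(x,\chi):=\sum_{n\le x}\chi(n)\Lambda(n),\nonumber
\end{align}
which reduces the problem to estimating the twisted sums $\psi(x,\chi)$ and controlling their aggregate over the $\phi(q)$ characters.

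Second, I would isolate the principal character $\chi_0$. Since $\chi_0(n)=1$ for $(n,q)=1$ and vanishes otherwise, one has $\psi(x,\chi_0)=\psi(x)+O((\log q)(\log x))$, and the Prime Number Theorem in the form $\psi(x)=x+O(x\,e^{-c_0\sqrt{\log x}})$ produces the claimed main term $x/\phi(q)$ with an admissible error as soon as $q\le(\log x)^A$. The whole difficulty is thus to show that the contribution of the $\phi(q)-1$ non-principal characters is $O(x\,e^{-c\sqrt{\log x}})$.

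Third, for each non-principal $\chi$ I would descend to the primitive character $\chi^*$ inducing it and apply the explicit formula
\begin{align}
\psi(x,\chi^*)=-\sum_{|\gamma|\le T}\frac{x^{\rho}}{\rho}+O\!\left(\frac{x(\log qx)^2}{T}\right),\nonumber
\end{align}
where $\rho=\beta+i\gamma$ runs over the non-trivial zeros of $L(s,\chi^*)$. The Landau--Page theorem furnishes a constant $c_1>0$ with $L(s,\chi)\ne0$ for $\sigma>1-c_1/\log(q(|t|+2))$, with at most one exception: a single real (Siegel) zero $\beta_1$ belonging to at most one real character $\chi_1$ modulo $q$. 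Combining this region with the zero-count $N(\chi,T)\ll T\log(qT)$ to bound $x^{\rho}/\rho$ over the admissible zeros, and optimizing $T$ by taking $\log T\asymp\sqrt{\log x}$, gives for every non-exceptional $\chi$ the estimate $\psi(x,\chi)\ll x\,e^{-c_2\sqrt{\log x}}$ uniformly for $q\le(\log x)^A$; summing over $\chi$ then absorbs the factor $\phi(q)\le q\le(\log x)^A$.

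Fourth, and this is the crux, I would dispose of the possible exceptional term $-x^{\beta_1}/\beta_1$. Here the zero-free region alone is insufficient because $\beta_1$ may lie too close to $1$, and I would invoke Siegel's theorem: for every $\varepsilon>0$ there is an (ineffective) constant $C(\varepsilon)>0$ such that any real zero of $L(s,\chi_1)$ satisfies $\beta_1<1-C(\varepsilon)\,q^{-\varepsilon}$. Choosing $\varepsilon=\tfrac{1}{4A}$ and using $q\le(\log x)^A$ yields $x^{\beta_1}\ll x\exp(-C(\varepsilon)(\log x)^{3/4})$, which is $O(x\,e^{-c\sqrt{\log x}})$ for a suitable $c$ once $x$ exceeds a threshold $x(A)$. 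The main obstacle is precisely this step: Siegel's theorem is \emph{ineffective}, so neither $C(\varepsilon)$ nor the resulting threshold $x(A)$ can be exhibited explicitly --- which is exactly why the statement asserts only the existence of $x(A)$. Assembling the principal-character main term with these bounds for the non-principal characters completes the proof.
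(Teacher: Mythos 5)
Your proposal is correct and is precisely the standard Siegel--Walfisz argument; the paper offers no proof of this statement at all, merely citing Montgomery--Vaughan, whose proof follows exactly your route (character orthogonality, isolating $\chi_0$ via the prime number theorem, the explicit formula with the Landau--Page zero-free region for non-principal characters, and Siegel's ineffective bound on the exceptional real zero, which is indeed the sole source of the ineffective threshold $x(A)$). You also rightly noticed and repaired a defect in the statement itself: as written, $\Psi(x;q,a)=\#\{p\leq x:\ p\equiv a\pmod q\}$ is a prime count, whose main term would be $\mathrm{li}(x)/\phi(q)$, so your von Mangoldt--weighted interpretation $\sum_{n\leq x,\ n\equiv a\pmod q}\Lambda(n)$ is the only one consistent with the claimed main term $x/\phi(q)$.
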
In this paper by adapting some methods employed in establishing this result plus some new ideas, we show the following \begin{theorem}For some constant $c>0$ and for any $A>0$,  there exist some $x(A)>0$  such that If $q\leq (\log x)^{A}$ then \begin{align}\Psi_z(x;\mathcal{N}_q(a,b),q)&=\frac{\Theta (z)}{2\phi(q)}x+O\bigg(\frac{x}{e^{c\sqrt{\log x}}}\bigg)\nonumber
\end{align}for $x\geq x(A)$. In particular for $q\leq (\log x)^{A}$ for any $A>0$\begin{align}\Psi_z(x;\mathcal{N}_q(a,b),q)\sim \frac{x\mathcal{D}(z)}{2\phi(q)}\nonumber
\end{align}for some constant $\mathcal{D}(z)>0$ and where $\phi(q)=\# \{(a,b):(p_i,p_{i+z})\in \mathcal{N}_q(a,b)\}$. 
\end{theorem}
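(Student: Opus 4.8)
The plan is to transport the character-orthogonality architecture behind the Siegel--Walfisz theorem into the two-dimensional setting of prime pairs, using the lattice characters attached to the coset lattice congruence classes $\mathcal{N}_q(a,b)$. First I would record the orthogonality relation for these lattice characters, which should detect membership in a fixed coset class through
\begin{align}
\mathbf{1}\{(m,n)\in\mathcal{N}_q(a,b)\}=\frac{1}{\phi(q)}\sum_{\chi}\overline{\chi(a,b)}\,\chi(m,n),\nonumber
\end{align}
the sum ranging over the $\phi(q)=\#\{(a,b):(p_i,p_{i+z})\in\mathcal{N}_q(a,b)\}$ lattice characters modulo $q$. Applying this to the prime pairs $(p_i,p_{i+z})$ with $p_i\leq x$ then produces the decomposition
\begin{align}
\Psi_z(x;\mathcal{N}_q(a,b),q)=\frac{1}{\phi(q)}\sum_{\chi}\overline{\chi(a,b)}\sum_{p_i\leq x}\chi(p_i,p_{i+z}),\nonumber
\end{align}
which splits the count into a principal-character main term and a sum of non-principal twists.

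Second, I would isolate the principal lattice character $\chi_0$. Its contribution is $\phi(q)^{-1}$ times the unrestricted sum over prime pairs $(p_i,p_{i+z})$ with $p_i\leq x$. Since the asserted main term is linear in $x$, this sum must carry a von Mangoldt--type weight, so that the principal term is the $\Lambda$-weighted prime-pair sum whose conjectured asymptotic $\sim\Theta(z)x/2$ is precisely the Hardy--Littlewood prediction; supplying this asymptotic is the role of the omega-function and area-method apparatus developed in the earlier sections, and the factor $\tfrac12$ encodes the ordering convention on the pair. This step fixes the constants $\Theta(z)$ and $\mathcal{D}(z)$ and already yields the ``in particular'' asymptotic once the remaining terms are shown to be of smaller order.

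Third, I would estimate the non-principal twists $\sum_{p_i\leq x}\chi(p_i,p_{i+z})$ for $\chi\neq\chi_0$. Following the classical route I would attach an $L$-function to each lattice character, recover the twisted sum by a Perron-type contour integral, and exploit a zero-free region of width $\asymp 1/\log(q\tau)$ together with Siegel's theorem; the ineffectivity of Siegel's bound is what forces the threshold $x(A)$ to be ineffective. Summing the non-principal contributions---$\phi(q)$ being polynomially bounded in $q$ and hence in $\log x$---against the saving $\exp(-c\sqrt{\log x})$ would absorb them into the stated error $O\!\left(x\,e^{-c\sqrt{\log x}}\right)$ uniformly for $q\leq(\log x)^{A}$.

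The hard part will be the third step, and behind it the honesty of the main term in the second: producing an asymptotic for the $\Lambda$-weighted prime-pair sum up to $x$ is, in its most familiar guise, the Hardy--Littlewood prime-pair problem, which is not known unconditionally, and obtaining square-root-log cancellation in the non-principal twists is harder still. Everything therefore rests on whether the omega-function and area-method machinery genuinely delivers these estimates, and I would scrutinize that machinery carefully to ensure it does not tacitly assume the very conclusion it is meant to establish.
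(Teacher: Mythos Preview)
Your orthogonality step matches the paper, but your third step diverges from it and, as written, would not go through. You propose to estimate each non-principal twist $\sum_{n\le x}\Lambda(n)\Lambda(n+z)\chi\bigl(n(n+z)\bigr)$ by attaching an $L$-function to the lattice character, running a Perron integral, and invoking a zero-free region plus Siegel. But a shifted correlation of $\Lambda$ twisted by a Dirichlet character is not governed by any $L$-function whose zero-free region we control; it is, with present technology, exactly as hard as the untwisted Hardy--Littlewood correlation you already flag as the obstacle in step two. No amount of Siegel--Walfisz input on $L(s,\chi)$ alone yields $e^{-c\sqrt{\log x}}$ cancellation in such a binary sum.

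The paper avoids this by never separating principal from non-principal. It applies the area method to \emph{every} character simultaneously, writing for each $\chi$
\begin{align}
\sum_{n\le x}\Lambda\chi(n)\,\Lambda\chi(n+z)
=\frac{\Theta(z)}{x}\sum_{2\le n\le x}\Lambda\chi(n)\sum_{m\le n-1}\Lambda\chi(m)+O_{z,q}(1),\nonumber
\end{align}
so that the correlation is replaced by a bilinear form in the one-variable sums $\Psi(n,\chi)=\sum_{m\le n}\Lambda(m)\chi(m)$. Only then is the classical Siegel--Walfisz estimate for $\Psi(x,\chi)$ invoked, followed by partial summation; the main term comes from $\chi=\chi_0$ and the non-principal $\chi$ land in the error automatically. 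Thus the area method is not merely the source of the constant $\Theta(z)$ in the main term---it is the device that dismantles the correlation in every twist, and that is the idea missing from your outline. Your final paragraph's suspicion is well placed: the earlier area-method lemma is stated only as a lower bound for nonnegative $f$, whereas here it is used as an equality with a single $\Theta(z)$ valid for complex $\Lambda\chi$, and that is precisely the step deserving scrutiny; but as a reconstruction of the paper's argument, the fix is to move the area-method decomposition ahead of any character splitting and then feed in the standard $\Psi(x,\chi)$ bound.
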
This result establishes equidistribution among the coset lattice congruence classes. 

\section{\textbf{Notations}}
In this paper the quantity $\pi_{z}(x; q,\mathcal{N}(a,b)):=\# \{(p_i,p_{i+z}):(p_i,p_{i+z})\in \mathcal{N}_q(a,b), \quad \nonumber \\ p_i,p_{i+z}\leq x\}$. Also the function $\omega(q)$ will be referred to as $\omega(q)=\# \{(a,b):(n_1,n_2)\equiv (a,b)\pmod q\}$. As is standard, the inequality $|f(n)|\leq Kg(n)$ for some constant $K>0$ for some $n\geq n_0$ will be compactly written as $f(n)\ll g(n)$. Similarly, the inequality $|f(n)|\geq Mg(n)$ for some $M>0$ for all $n\geq n_0$ will often be written as $f(n)\gg g(n)$.  In situations where both inequalities hold, then we will write $f(n)\asymp g(n)$. In the case where the implicit constant depends on another parameter, say $z$, then we write $f(n)\asymp_z g(n)$. As is customary, $\epsilon$ and $\delta$ are positive numbers that are usually taken to be small. In cases where they depend on some variable $n$, then we will write $\delta(n)$ and $\epsilon(n)$. The limit $\lim \limits_{n\longrightarrow \infty}\frac{f(n)}{g(n)}=0$ will be compactly written as $f(n)=o(g(n))$ and for $\lim \limits_{n\longrightarrow \infty}\frac{f(n)}{g(n)}=1$, we will write $f(n)\sim g(n)$. We will instead denote the greatest common divisor of, say $a$ and $b$, as $\gcd(a,b)$. This is to avoid interference with the integer lattice notation. Also through out this paper we will choose to work with coset lattice congruence classes. We will denote this coset lattice congruence class modulo $q$ with representative $(a,b)$ by the letter $\mathcal{N}_q(a,b)$. The problem is much more tractable within this framework. Also we will work within the half Cartesian plane when handling pairs of positive integers rather than the entire Cartesian plane.

\section{\textbf{Overview and Idea of proof of main result}}
The main result of this paper can be thought of as a solution to a problem in the Cartesian plane. The quest is to establish an analogue of the classical Dirichlet theorem of primes in arithmetic progression. The steps that goes into addressing this sort of problem are outline vividly in the following sequel. 

\begin{enumerate}
\item [$\bullet$] First we construct and label containers, which in our language we choose to call cosets, and carry them along into the lower half of the positive Cartesian plane. Each of the labeled containers admits only equivalent  pairs. That is pairs that are not equivalent cannot possibly live in the same container. 
\bigskip

\item [$\bullet$] For each of these labeled containers we deposit equivalent prime pairs. It is important to remark there will not be any empty container since there is a one-to-one correspondence between these containers and primitive  congruence classes in any given modulus, $q$ say.
\bigskip

\item [$\bullet$] In each container we designate a representative. The representatives are those pairs of the form $(a,b)\pmod q$ with $a,b<q$. For each of these representatives we squash them onto the positive horizontal axis of the plane. This then converts the problem to understanding pair correlation of an arbitrary gap.
\bigskip

\item [$\bullet$] Next by applying the orthogonality principle employed in the proof of the Classical Dirichlet theorem with the aid of the newly introduced character, which is a two dimensional version of the Dirichlet character, we carry out the standard multiplicative Fourier expansion.
\bigskip

\item [$\bullet$] Next we appeal to the area method to decompose our correlation to a bi-linear sum. This leaves us with a sum that is well understood and was employed in the literature to establish equidistribution on the primes. By an application of partial summation to this bi-linear sum and plugging into the main sum, the result then follows. This immediately establishes equidistribution of prime pairs of an idealized gap.   

\end{enumerate}

\section{\textbf{Preliminary results}}
In this section we state some results that will partly be needed for establishing the main result of this paper. The results in this section form the foundation for establishing our main result of this paper.
\bigskip

\begin{lemma}\label{starter}Let $\Pi_x:=\{(p_1,p_2):p_1\leq x,~p_2\leq x\}$
then \begin{align}\# \Pi_x=\frac{\pi(x)(\pi(x)-1)}{2}\nonumber
\end{align}where $\pi(x)$ is the prime counting function.
\end{lemma}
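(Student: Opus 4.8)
The plan is to treat this as a direct enumeration, since $\Pi_x$ is a finite set whose elements are pairs of primes bounded by $x$. First I would invoke the definition of the prime counting function: there are exactly $\pi(x)$ primes not exceeding $x$. Because the paper adopts the convention of working in the half Cartesian plane (as stated in the Notations section), the set $\Pi_x$ records each unordered pair of distinct primes once, equivalently the pairs $(p_1,p_2)$ with $p_1<p_2$ and $p_1,p_2\leq x$. Thus $\#\Pi_x$ is precisely the number of $2$-element subsets of the set of primes up to $x$.

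The key step is then the elementary count. I would first observe that the full grid of ordered pairs $(p_1,p_2)$ with $p_1,p_2\leq x$ has cardinality $\pi(x)^2$. Removing the diagonal pairs $(p,p)$, of which there are exactly $\pi(x)$, leaves $\pi(x)^2-\pi(x)=\pi(x)(\pi(x)-1)$ ordered pairs of distinct primes. Passing to the half-plane collapses each unordered pair $\{p_1,p_2\}$, which corresponds to the two ordered pairs $(p_1,p_2)$ and $(p_2,p_1)$, onto a single representative; dividing by $2$ yields
\begin{align}\#\Pi_x=\frac{\pi(x)(\pi(x)-1)}{2}=\binom{\pi(x)}{2}\nonumber
\end{align}
as claimed. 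Alternatively one may quote the binomial coefficient directly, since choosing an unordered pair of distinct primes from a pool of $\pi(x)$ primes is by definition $\binom{\pi(x)}{2}$.

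The argument carries no genuine analytic obstacle; the only point that requires care is making the counting convention explicit, namely that $\Pi_x$ is taken in the half-plane so that a pair and its transpose are identified and the diagonal $p_1=p_2$ is excluded. Once this convention is fixed, the identity follows immediately from the symmetry of the ordered grid under coordinate swap. I would therefore keep the proof short and emphasize the reduction to $2$-element subsets, since this framing is exactly what the later sections exploit when distributing prime pairs into the coset containers.
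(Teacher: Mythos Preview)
Your proposal is correct and takes essentially the same approach as the paper: both reduce to the elementary count of unordered pairs of distinct primes up to $x$, yielding $\binom{\pi(x)}{2}$. The only cosmetic difference is that the paper enumerates by fixing the smaller prime and summing $\sum_{j=1}^{\pi(x)-1}(\pi(x)-j)$, whereas you count ordered pairs and halve after removing the diagonal; the content is identical.
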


\begin{proof}
Consider the positive Cartesian plane and slice it into two so that the lower half contains finitely many points. Let $\Pi_x$ denotes the set of all prime pairs in this finite region. By letting $M$ and $N$ be the horizontal and the vertical boundary of the lower region, we set $|M|=|N|=x$ and consider the primes along the horizontal and the vertical boundary of the bounded region given by  $M_x=\{p\leq x: p\in \rho\}$ and $N_x=\{p\leq x:p\in \rho\}$. More explicitly we consider the sets \begin{align}M_x=\{p_1,p_2,\ldots, p_{\pi(x)}\}\nonumber
\end{align}and \begin{align}N_x=\{p_1,p_2,\ldots,p_{\pi(x)}\}.\nonumber
\end{align}Now we construct the set \begin{align}\mathcal{C}=\bigcup \limits_{j=1}^{\pi(x)-1}\{p_j\cdot p_{j+1},\ldots, p_{j}\cdot p_{\pi(x)}\}.\nonumber
\end{align}It follows that the number of distinct prime pairs in the bounded region is given by \begin{align}\# \mathcal{C} =\bigg|\bigcup \limits_{j=1}^{\pi(x)-1}\{p_j\cdot p_{j+1},\ldots, p_{j}\cdot p_{\pi(x)}\}\bigg|.\nonumber
\end{align}Since $\{p_i\cdot p_{i+1},\ldots,p_i\cdot p_{\pi(x)}\}\cap \{p_j\cdot p_{j+1},\ldots,p_{j}\cdot p_{\pi(x)}\}=\emptyset$ for $i\neq j$, it follows that \begin{align}\# \mathcal{C}&=\sum \limits_{j=1}^{\pi(x)-1}\# \{p_j\cdot p_{j+1},\ldots, p_j\cdot p_{\pi(x)}\}\nonumber \\&=\frac{\pi(x)(\pi(x)-1)}{2}.\nonumber
\end{align}This proves the Lemma.
\end{proof}

\begin{remark}
Knowing the number of prime pairs in any finite region of the Cartesian plane is of critical importance. For It sets a score in verifying that certain images weighted by their multiplicity coincides. Next we launch a terminology concerning the integer lattice.
\end{remark}

\begin{definition}
Let $(n_1,n_2)$ and $(m_1,m_2)$ be any two integer lattice. Then we say $(n_1,n_2)$ and $(m_1,m_2)$ are equivalent pairs, denoted $(n_1,n_2)\simeq (m_1,m_2)$ if and only if \begin{align}n_1\cdot n_2\equiv m_1\cdot m_2\pmod q.\nonumber
\end{align}We say the pair $(n_1,n_2)\in (a,b)\pmod q$ if $n_1\equiv a\pmod q$ and $n_2\equiv b \pmod q$.
\end{definition}

\begin{definition}
Let $\gcd(a,q)=\gcd(b,q)=1$ with $a,b<q$. By the coset lattice congruence class modulo $q$ with representative $(a,b)\pmod q$, we mean the set \begin{align}\mathcal{N}=\{(n_1,n_2)\pmod q:(n_1,n_2)\simeq (a,b),~\gcd(n_1,q)=\gcd(n_2,q)=1\}.\nonumber
\end{align}
\end{definition}

\begin{remark}
Next we show that there does exist a correspondence between the coset lattice congruence modulo $q$ with congruence classes modulo $q$.
\end{remark}

\begin{proposition}\label{coset}
Let $\mathcal{M}=\{\mathcal{N}_i\}$ be the set of all coset lattice congruence classes modulo $q$, then \begin{align}\sum \limits_{\mathcal{M}}1=\phi(q).\nonumber
\end{align}
\end{proposition}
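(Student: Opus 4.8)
The plan is to show that the assignment sending each coset lattice congruence class to the residue of the product of any of its representatives is a bijection onto the reduced residue classes modulo $q$, of which there are exactly $\phi(q)$. By the definition of the relation $\simeq$, two coprime pairs $(n_1,n_2)$ and $(m_1,m_2)$ lie in the same class precisely when $n_1 n_2\equiv m_1 m_2 \pmod q$. Consequently a class $\mathcal{N}$ is completely determined by, and determines, the single residue $ab\pmod q$ carried by any representative $(a,b)$. This motivates defining a map $\Phi$ from $\mathcal{M}$ to the residues modulo $q$ by $\Phi(\mathcal{N})=ab\pmod q$.

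First I would dispatch well-definedness and injectivity together: the defining relation $\simeq$ forces every representative of a fixed class to carry the same product modulo $q$, which gives well-definedness, and conversely two classes sharing a product residue must coincide, which gives injectivity. Next, since every representative satisfies $\gcd(a,q)=\gcd(b,q)=1$, the product $ab$ is itself coprime to $q$, so the image of $\Phi$ consists of reduced residues modulo $q$.

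The one step deserving genuine care is surjectivity onto all reduced residues. Here I would observe that the collection of attainable products is closed under multiplication and contains every reduced residue $c$, because the pair $(1,c)$ is an admissible representative---its entries being both coprime to $q$---with product exactly $c$. Hence $\Phi$ surjects onto the full set of reduced residue classes and is therefore a bijection. Counting the two sides then yields $\sum_{\mathcal{M}}1=\#\mathcal{M}=\phi(q)$, which is the assertion.

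I do not anticipate a deep obstacle; the only things to watch are the trivial normalization of representatives into the range $a,b<q$, which does not affect the count, and the degenerate case $q=1$, where $\phi(1)=1$ is consistent with there being a single class. The essential content is simply that the product map on the reduced residues is surjective because that set forms a group under multiplication, and this is exactly what powers the surjectivity step above.
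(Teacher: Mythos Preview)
Your proof is correct and follows essentially the same approach as the paper: both define the map $\mathcal{N}\mapsto ab\pmod q$ from coset lattice classes to reduced residue classes, verify it is well-defined and injective, note surjectivity, and conclude by counting. Your surjectivity argument via the pair $(1,c)$ is in fact more explicit than the paper's, which simply asserts that surjectivity is ``obvious by virtue of definition of the map.''
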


\begin{proof}
Consider the map $\mathcal{F}:\mathcal{M}=\{\mathcal{N}_i\}\longrightarrow \{ab\pmod q:\gcd(ab,q)=1\}$ for $\mathcal{F}(\mathcal{N}_i)=ab\pmod q$ if $(a,b)\in \mathcal{N}_i$. It is easy to see that the map is well defined. For suppose that for $\mathcal{N}_i,\mathcal{N}_j\in \mathcal{M}$ such that $\mathcal{N}_i\neq \mathcal{N}_j$ and that $\mathcal{F}(\mathcal{N}_i)=\mathcal{F}(\mathcal{N}_j)$, then it follows that $n_in_{i+1}\equiv n_jn_{j+1}\pmod q$ for any $(n_{i},n_{i+1})\in \mathcal{N}_i$ and $(n_j,n_{j+1})\in \mathcal{N}_j$. This certainly implies that $(n_i,n_{i+1})\simeq (n_j,n_{j+1})$ and it follows that $\mathcal{N}_i=\mathcal{N}_j$. This is a contradiction. Thus the map is well defined. Next we see that the map is injective. For suppose $\mathcal{F}(\mathcal{N}_i)=\mathcal{F}(\mathcal{N}_j)$, then it follows that $n_in_{i+1}\equiv n_{j}n_{j+1}\pmod q$ for any $(n_i,n_{i+1})\in \mathcal{N}_i$ and $(n_j,n_{j+1})\in \mathcal{N}_j$. It follows that $(n_i,n_{i+1})\simeq (n_j,n_{j+1})$ and it must certainly be that $\mathcal{N}_i=\mathcal{N}_j$. This establishes injectivity. Surjectivity is obvious by virtue of definition of the map. Thus the map is in one to one correspondence with the $\phi(q)$ primitive congruence classes modulo $q$. This proves the proposition.
\end{proof}

\begin{remark}
Since any prime pairs of an idealized gap must sit in the same congruence class with some prime pair not considered under the regime of gaps, establishing equidistribution reduces to equidistribution among the classes of all prime pairs in the bounded region considered in the proof of Lemma \ref{starter}. In light of this, we show in the mean time that each of the classes must contain at least two elements, if we allow for sufficiently large length of the boundaries.
\end{remark}
\bigskip

\begin{proposition}
Let \begin{align}\mathcal{X}_q=\left\{(p_1,p_2):(p_1,p_2)\in (a,b)\pmod q,~\gcd(p_1,q)=\gcd(p_2,q)=1\right \},\nonumber
\end{align}for $q\leq x$ then $\#\mathcal{X}_q\geq 2$ for some congruence class $(a,b)\pmod q$.
\end{proposition}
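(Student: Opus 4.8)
The plan is to argue by the pigeonhole principle, pitting the total number of admissible prime pairs in the bounded region against the number of available primitive congruence classes, and then to verify that the former strictly exceeds the latter once $x$ is large.

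First I would restrict attention to the bounded region of Lemma \ref{starter}, that is, to the prime pairs $(p_1,p_2)$ with $p_1,p_2\leq x$. From this collection I discard every pair for which $\gcd(p_1,q)>1$ or $\gcd(p_2,q)>1$; the only primes failing to be coprime to $q$ are the prime divisors of $q$, of which there are at most $\log_2 q$, so the number of discarded pairs is $O(\pi(x)\log q)$. By Lemma \ref{starter} the number of surviving pairs, each of which lies in $\mathcal{X}_q$ for exactly one primitive representative $(a,b)\pmod q$, is therefore
\begin{align}
\frac{\pi(x)(\pi(x)-1)}{2}-O(\pi(x)\log q)=\frac{\pi(x)^2}{2}\big(1+o(1)\big).\nonumber
\end{align}

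Next I would view the primitive residue pairs $(a,b)\pmod q$ with $\gcd(a,q)=\gcd(b,q)=1$ as boxes: there are at most $\phi(q)^2$ of them. Each surviving prime pair is placed into the unique box determined by its residues, so if the number of pairs strictly exceeds the number of boxes, the pigeonhole principle forces some class $(a,b)\pmod q$ to receive at least two prime pairs, which is precisely $\#\mathcal{X}_q\geq 2$. Thus the proposition reduces to the single inequality
\begin{align}
\frac{\pi(x)^2}{2}\big(1+o(1)\big)>\phi(q)^2.\nonumber
\end{align}

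The hard part, and really the only content of the argument, is to secure this last inequality. Since $\phi(q)\leq q$ and, by the prime number theorem, $\pi(x)\sim x/\log x$, the inequality holds as soon as $x$ is large enough relative to $q$; for a fixed modulus the left-hand side tends to infinity while the right-hand side is bounded, so a threshold $x(q)$ beyond which it holds always exists. In particular it holds comfortably throughout the range $q\leq(\log x)^{A}$ governing the main theorem, and more generally whenever $q=o(x/\log x)$. Choosing $x$ past this threshold and invoking the pigeonhole count then completes the proof.
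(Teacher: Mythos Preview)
Your argument is essentially the same pigeonhole strategy as the paper's: invoke Lemma~\ref{starter} to count the prime pairs in the bounded region, compare against the number of available boxes, and conclude. The paper's version is terser---it simply notes $\frac{\pi(x)(\pi(x)-1)}{2}>x-1\geq q$ for $x$ sufficiently large and appeals to pigeonhole directly, without bothering to excise the pairs with a prime factor in common with $q$.

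The one substantive difference is the choice of boxes. You pigeonhole into the $\phi(q)^2$ primitive residue pairs $(a,b)\pmod q$, which matches the literal reading of the statement via Definition~4.3. The paper, in line with the surrounding remark and Proposition~\ref{coset}, is implicitly pigeonholing into the $\phi(q)\leq q$ \emph{coset} lattice congruence classes, which is why comparing against $q$ suffices there. Your box count is larger, so your inequality $\tfrac{1}{2}\pi(x)^2(1+o(1))>\phi(q)^2$ is the sharper requirement; it still holds throughout the paper's working range $q\leq(\log x)^A$, so nothing is lost, and your version has the virtue of not depending on the coset identification.
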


\begin{proof}
By Lemma \ref{starter}, the number of prime pairs within the bounded region with boundaries containing primes $p\leq x$ is given by \begin{align}\frac{\pi(x)(\pi(x)-1)}{2}>x-1\geq q\nonumber
\end{align}for $x$ sufficiently large. Thus the result follows from the pigeon-hole principle.
\end{proof}
\bigskip

In fact, we could have done better than just counting the number of distinct prime pairs, by counting the number of prime pairs of an idealized gap. It is important to notice that counting prime pairs of an idealized gap is not a trivial game compared to just counting prime pairs, since counting the number of prime pairs of the forms $(p_i,p_{i+z})$ is equivalent to understanding the correlation \begin{align}\sum \limits_{n\leq x}\theta(n)\theta(n+z)\nonumber
\end{align}where \begin{align}\theta(n):=\begin{cases}\log p \quad \mathrm{if}\quad n=p\\0 \quad \mathrm{otherwise}\end{cases}\nonumber
\end{align}for a fixed $z$. Sieve theory is a tool perfectly suited for this, which we shall use together with another method to obtain a rough estimate for the number of such prime pairs.  In \cite{Agama2019TheAM}, the author developed the area method. As an immediate consequence, we establish a lower bound for the number of prime pairs of the form $(p_i,p_{i+z})$ for a fixed $z$.
\bigskip

\begin{lemma}\label{cheb}
The estimate holds\begin{align}\sum \limits_{n\leq x}\theta(n)=(1+o(1))x.\nonumber
\end{align}
\end{lemma}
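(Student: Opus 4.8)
The plan is to recognize that the definition of $\theta$ forces $\sum_{n\le x}\theta(n)=\sum_{p\le x}\log p$, which is precisely Chebyshev's function $\vartheta(x)$, and then to extract its asymptotic from the Prime Number Theorem. I would take as known the Prime Number Theorem in the form $\pi(x)=(1+o(1))\frac{x}{\log x}$, where $\pi$ is the prime counting function, together with the crude upper bound $\pi(t)\ll t/\log t$ valid for $t\ge 2$.

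First I would apply partial (Abel) summation to the weighted sum, taking the counting function $\pi(t)$ against the smooth weight $\log t$, which yields
\begin{align}
\sum_{p\le x}\log p=\pi(x)\log x-\int_2^x\frac{\pi(t)}{t}\,dt.\nonumber
\end{align}
Next I would estimate the two contributions separately. The boundary term satisfies $\pi(x)\log x=(1+o(1))x$ directly from the Prime Number Theorem. For the integral, the bound $\pi(t)\ll t/\log t$ gives $\int_2^x \pi(t)/t\,dt\ll \int_2^x dt/\log t\ll x/\log x=o(x)$. Substituting both estimates produces $\sum_{n\le x}\theta(n)=(1+o(1))x+o(x)=(1+o(1))x$, as required.

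There is no deep obstacle in this argument, since the entire arithmetic content is already packaged inside the Prime Number Theorem, which I assume; the lemma is essentially a restatement of it. The only point demanding care is the uniform control of the integral term, and I would handle the small-$t$ range near $2$ by absorbing its bounded contribution into the $o(x)$ error. As an alternative route that avoids partial summation, I could instead invoke the equivalent statement $\psi(x)=\sum_{n\le x}\Lambda(n)=(1+o(1))x$ and use the elementary identity $\psi(x)-\vartheta(x)=\sum_{k\ge 2}\vartheta(x^{1/k})\ll \sqrt{x}(\log x)^2$, so that the prime-power correction is negligible and $\vartheta(x)=\psi(x)+o(x)=(1+o(1))x$ follows immediately.
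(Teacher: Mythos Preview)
Your argument is correct: the sum is exactly Chebyshev's $\vartheta(x)$, and either Abel summation against $\pi(t)$ or the comparison $\psi(x)-\vartheta(x)\ll\sqrt{x}(\log x)^2$ recovers the asymptotic from the Prime Number Theorem. The paper itself gives no proof of this lemma, treating it as a standard cited fact, so your write-up actually supplies more than the paper does.
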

\bigskip
 
Next we present a flavour of the Area method. This method has already been developed in \cite{Agama2019TheAM}. As is useful for counting the number of prime pairs of an arbitrary gap, we do not hesitate to reproduce it here.

\begin{theorem}\label{identity 1}
Let $\{r_j\}_{j=1}^{n}$ and $\{h_j\}_{j=1}^{n}$ be any sequence of real numbers, and let $r$ and $h$ be any real numbers satisfying $\sum \limits_{j=1}^{n}r_j=r~\mathrm{and}~\sum \limits_{j=1}^{n}h_j=h$,  and \begin{align}(r^2+h^2)^{1/2}=\sum \limits_{j=1}^{n}(r^2_j+h^2_j)^{1/2},\nonumber
\end{align}then \begin{align}\sum \limits_{j=2}^{n}r_jh_j=\sum \limits_{j=2}^{n}h_j\bigg(\sum \limits_{i=1}^{j}r_i+\sum \limits_{i=1}^{j-1}r_i\bigg)-2\sum \limits_{j=1}^{n-1}r_j\sum \limits_{k=1}^{n-j}h_{j+k}.\nonumber
\end{align}
\end{theorem}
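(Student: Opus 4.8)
The statement to prove is Theorem~\ref{identity 1}, a purely algebraic identity about finite sequences of real numbers subject to the constraint that the Euclidean norm of the aggregate vector $(r,h)$ equals the sum of the Euclidean norms of the component vectors $(r_j,h_j)$. The plan is to treat this as an identity in two parts: first extract the genuine content of the hypothesis, and then verify the claimed formula by a direct manipulation of the double sums on the right-hand side.

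First I would unpack the norm constraint. Since $(r^2+h^2)^{1/2}=\sum_{j=1}^{n}(r_j^2+h_j^2)^{1/2}$ together with $\sum_j r_j = r$ and $\sum_j h_j = h$, the triangle inequality in $\mathbb{R}^2$ is saturated. Equality in the triangle inequality forces all the vectors $(r_j,h_j)$ to be nonnegative scalar multiples of a single direction, i.e.\ they are collinear and coherently oriented. This collinearity is the structural fact the hypothesis is really buying, and I expect it to be what pins down the relationship between the $r_j$ and the $h_j$ that makes the target identity true. So the first key step is to record that there is a fixed unit direction and nonnegative scalars $\lambda_j$ with $(r_j,h_j)=\lambda_j(\cos\alpha,\sin\alpha)$, equivalently $r_j h_k = r_k h_j$ for all $j,k$.

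Next I would attack the right-hand side directly as an identity in the $r_j$ and $h_j$. Write $R_j=\sum_{i=1}^{j} r_i$ for the partial sums, so that $\sum_{i=1}^{j}r_i+\sum_{i=1}^{j-1}r_i = R_j + R_{j-1} = 2R_{j-1}+r_j$. Then the first term on the right is $\sum_{j=2}^{n} h_j(R_j+R_{j-1})$, and the second term is the off-diagonal double sum $2\sum_{j=1}^{n-1} r_j \sum_{k=1}^{n-j} h_{j+k} = 2\sum_{j<\ell} r_j h_\ell$. The plan is to expand $\sum_{j=2}^{n} h_j(R_j+R_{j-1})$ into a double sum over pairs $(i,j)$ and collect terms by whether $i<j$, $i=j$, or $i>j$; the diagonal contributes $\sum_{j} r_j h_j$ and the cross terms should, after subtracting $2\sum_{j<\ell} r_j h_\ell$, leave exactly $\sum_{j=2}^{n} r_j h_j$ on the left. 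This is a bookkeeping computation in partial summation (an Abel-type rearrangement), and at this stage it is where the collinearity relation $r_j h_\ell = r_\ell h_j$ gets invoked to symmetrize the mismatched cross terms.

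The step I expect to be the main obstacle is reconciling the algebraic double-sum expansion with the left-hand side $\sum_{j=2}^{n} r_j h_j$ rather than the full $\sum_{j=1}^{n} r_j h_j$: the indexing that starts the outer sums at $j=2$ while the inner partial sums run from $i=1$ has to be tracked carefully, and the collinearity hypothesis must be used precisely to convert asymmetric products $r_j h_i$ (with $i\neq j$) into the symmetric form needed to cancel against the $2\sum_{j<\ell}r_j h_\ell$ term. I would therefore carry out the expansion, isolate the residual cross terms, and then apply $r_j h_i = r_i h_j$ to show they telescope or cancel, completing the verification of the identity.
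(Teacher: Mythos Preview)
Your algebraic approach is different from the paper's and, once carried through, actually cleaner. The paper proves the identity geometrically: it realises $(r,h)$ as the base and height of a right triangle, partitions the hypotenuse into $n$ segments of lengths $(r_j^2+h_j^2)^{1/2}$ (this is where the norm hypothesis enters the picture), removes the $n$ small triangles so formed, and computes the residual ``staircase'' area in two ways---once as a sum of rectangles giving $\sum_{j=1}^{n-1} r_j\sum_{k=1}^{n-j} h_{j+k}$, and once via trapezoids giving $\tfrac12\sum_{j\ge 2} h_j(R_j+R_{j-1})+\tfrac12 r_1 h_1$. Equating these with $\tfrac12 rh-\tfrac12\sum_j r_j h_j$ yields the claim.

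Your plan to expand the right-hand side directly is correct, but you have overestimated the role of the hypothesis. If you write $R_j=\sum_{i\le j} r_i$, then
\[
\sum_{j=2}^{n} h_j(R_j+R_{j-1}) \;=\; \sum_{j=2}^{n} r_j h_j \;+\; 2\sum_{1\le i<j\le n} r_i h_j,
\]
while the subtracted term is $2\sum_{j=1}^{n-1} r_j\sum_{k=1}^{n-j} h_{j+k} = 2\sum_{1\le j<\ell\le n} r_j h_\ell$, which is the \emph{same} double sum with different dummy indices. These cancel identically, leaving $\sum_{j=2}^{n} r_j h_j$ with no appeal to collinearity. In other words, the identity is a pure algebraic tautology valid for \emph{all} real sequences $\{r_j\},\{h_j\}$; the norm constraint $(r^2+h^2)^{1/2}=\sum_j (r_j^2+h_j^2)^{1/2}$ is never used. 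Your anticipated ``main obstacle''---symmetrising mismatched cross terms via $r_j h_i=r_i h_j$---does not arise, and you should drop the collinearity discussion from the write-up. What your route buys over the paper's is precisely this observation: the hypothesis is superfluous, and the identity requires no geometric scaffolding.
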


\begin{proof}
Consider a right angled triangle, say $\Delta ABC$ in a plane, with height $h$ and base $r$. Next, let us partition the height of the triangle into $n$ parts, not neccessarily equal. Now, we link those partitions along the height to the hypothenus, with the aid of a parallel line. At the point of contact of each line to the hypothenus, we drop down a vertical line to the next line connecting the last point  of the previous partition, thereby forming another right-angled triangle, say $\Delta A_1B_1C_1$ with base and height $r_1$ and $h_1$ respectively. We remark that this triangle is covered by the triangle $\Delta ABC$, with hypothenus constituting a proportion of the hypothenus of triangle $\Delta ABC$. We continue this process until we obtain $n$ right-angled triangles $\Delta A_jB_j C_j$, each with base and height $r_j$ and $h_j$ for $j=1,2,\ldots n$. This construction satisfies \begin{align}h=\sum \limits_{j=1}^{n}h_j~ \mathrm{and}~r=\sum \limits_{j=1}^{n}r_j \nonumber
\end{align}and \begin{align}(r^2+h^2)^{1/2}=\sum \limits_{j=1}^{n}(r^2_j+h^2_j)^{1/2}.\nonumber
\end{align}Now, let us deform the original triangle $\Delta ABC$ by removing the smaller triangles $\Delta A_jB_jC_j$ for $j=1,2,\ldots n$. Essentially we are left with rectangles and squares piled on each other with each end poking out a bit further than the one just above, and we observe that the total area of this portrait is given by the relation \begin{align}\mathcal{A}_1&=r_1h_2+(r_1+r_2)h_3+\cdots (r_1+r_2+\cdots +r_{n-2})h_{n-1}+(r_1+r_2+\cdots +r_{n-1})h_n\nonumber \\&=r_1(h_2+h_3+\cdots h_n)+r_2(h_3+h_4+\cdots +h_n)+\cdots +r_{n-2}(h_{n-1}+h_n)+r_{n-1}h_n\nonumber \\&=\sum \limits_{j=1}^{n-1}r_j\sum \limits_{k=1}^{n-j}h_{j+k}.\nonumber
\end{align} On the other hand, we observe that the area of this portrait is the same as the difference of the area of triangle $\Delta ABC$ and the sum of the areas of triangles $\Delta A_jB_jC_j$ for $j=1,2, \ldots,n$. That is \begin{align}\mathcal{A}_1=\frac{1}{2}rh-\frac{1}{2}\sum \limits_{j=1}^{n}r_jh_j.\nonumber
\end{align}This completes the first part of the argument. For the second part,  along the hypothenus, let us construct small pieces of triangle, each of base and height $(r_i, h_i)$ $(i=1,2 \ldots, n)$ so that the trapezoid and the one triangle formed by partitioning becomes rectangles and squares. We observe also that this construction satisfies the relation \begin{align}(r^2+h^2)^{1/2}=\sum \limits_{i=1}^{n}(r^2_i+h^2_i)^{1/2},\nonumber
\end{align}Now, we compute the area of the  triangle in two different ways. By direct strategy, we have that the area of the triangle, denoted $\mathcal{A}$, is given by \begin{align}\mathcal{A}=1/2\bigg(\sum \limits_{i=1}^{n}r_{i}\bigg)\bigg(\sum \limits_{i=1}^{n}h_{i}\bigg).\nonumber
\end{align} On the other hand, we compute the area of the triangle by computing the area of each trapezium and the one remaining triangle and sum them together. That is, \begin{align}\mathcal{A}&=h_n/2\bigg(\sum \limits_{i=1}^{n}r_{i}+\sum \limits_{i=1}^{n-1}r_{i}\bigg)+h_{n-1}/2\bigg(\sum \limits_{i=1}^{n-1}r_{i}+\sum \limits_{i=1}^{n-2}r_{i}\bigg)+ \cdots +1/2r_1h_1.\nonumber
\end{align} By comparing the area of the second argument, and linking this to the first argument, the result follows immediately.
\end{proof}

\begin{corollary}\label{decomposition}
Let $f:\mathbb{N}\longrightarrow \mathbb{C}$, then we have the decomposition \begin{align}\sum \limits_{n\leq x-1} \sum \limits_{j\leq x-n} f(n)f(n+j)&=\sum \limits_{2\leq n\leq x}f(n)\sum \limits_{m\leq n-1}f(m).\nonumber
\end{align}
\end{corollary}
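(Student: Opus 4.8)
The plan is to prove the identity
\begin{align}
\sum_{n\leq x-1}\sum_{j\leq x-n} f(n)f(n+j)=\sum_{2\leq n\leq x} f(n)\sum_{m\leq n-1}f(m)\nonumber
\end{align}
by showing that both sides are merely two different ways of summing $f(a)f(b)$ over the same index set, namely all ordered pairs $(a,b)$ of positive integers with $1\leq a<b\leq x$. First I would rewrite the left-hand side by the substitution $m=n+j$. Since $j$ runs over $1\leq j\leq x-n$ and $n$ runs over $1\leq n\leq x-1$, the pair $(n,m)$ ranges exactly over $1\leq n<m\leq x$, so the left-hand side equals $\sum_{1\leq n<m\leq x} f(n)f(m)$. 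For the right-hand side, the outer variable $n$ runs over $2\leq n\leq x$ and the inner variable $m$ over $1\leq m\leq n-1$, which is the same triangular region with the roles of the smaller and larger coordinate swapped; relabelling confirms it equals $\sum_{1\leq m<n\leq x} f(m)f(n)$. Since both describe the identical set of pairs, the identity follows.

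An alternative and more structural route is to invoke Theorem \ref{identity 1} directly, which is presumably the intended approach given the placement of this corollary immediately afterward. In that case I would specialize the sequences by taking $r_j=h_j=f(j)$ for each $j$, so that the bilinear sum $\sum_{j=1}^{n-1}r_j\sum_{k=1}^{n-j}h_{j+k}$ on the right of Theorem \ref{identity 1} becomes precisely $\sum_{j=1}^{n-1}f(j)\sum_{k=1}^{n-j}f(j+k)$, which after setting the upper limit to $x$ matches the left-hand side of the corollary up to reindexing. The remaining terms of the area identity then reorganize into the single-sum expression $\sum_{2\leq n\leq x}f(n)\sum_{m\leq n-1}f(m)$ after collecting the telescoping contributions.

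I expect the genuine content here to be essentially bookkeeping rather than analysis, so the main obstacle is purely the careful handling of summation limits and the off-by-one boundaries: one must verify that the exclusion of the diagonal $n=m$ is consistent on both sides (the left-hand side never produces $j=0$, and the right-hand side starts at $n=2$ and caps the inner sum at $m=n-1$), and that no pair is counted twice or omitted at the endpoints $n=1$ and $n=x$. Because $f$ maps into $\mathbb{C}$ with no positivity or convergence assumptions and the sums are finite, there are no analytic subtleties; the entire proof reduces to a rigorous change-of-order-of-summation argument, which I would present by exhibiting the common index set $\{(a,b):1\leq a<b\leq x\}$ explicitly and noting that each side is the sum of $f(a)f(b)$ over that set.
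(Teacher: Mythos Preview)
Your proposal is correct. Your alternative route---specializing $r_j=h_j=f(j)$ in Theorem~\ref{identity 1}---is exactly what the paper does: it sets $r_j=h_j=f(j)$, checks that the hypotenuse condition $\sqrt{\mathcal{G}^2+\mathcal{G}^2}=\sqrt{2}\sum_j f(j)$ is satisfied, and declares the decomposition valid. Your primary approach, however, is genuinely different and in fact cleaner: the direct reindexing $m=n+j$ shows immediately that both sides equal $\sum_{1\le a<b\le x}f(a)f(b)$, bypassing the geometric area machinery entirely. This buys you something real: the paper's verification of the hypotenuse hypothesis tacitly assumes $\sqrt{f(j)^2}=f(j)$, which is problematic for complex-valued (or even sign-changing real) $f$, whereas your change-of-summation argument works for arbitrary $f:\mathbb{N}\to\mathbb{C}$ without any such caveat. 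The paper's route has the advantage of situating the identity within the broader ``area method'' framework used later, but as a stand-alone proof your direct argument is both shorter and more robust.
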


\begin{proof}
Let us take $f(j)=r_j=h_j$ in Theorem \ref{identity 1}, then we denote $\mathcal{G}$ the partial sums \begin{align}\mathcal{G}=\sum \limits_{j=1}^{n}f(j)\nonumber
\end{align}and we notice that \begin{align}\sqrt{(\mathcal{G}^2+\mathcal{G}^2)}&=\sum \limits_{j=1}^{n}\sqrt{(f(j)^2+f(j)^2}\nonumber \\&=\sqrt{2}\sum \limits_{j=1}^{n}f(j).\nonumber
\end{align}Since $\sqrt{(\mathcal{G}^2+\mathcal{G}^2)}=\sqrt{2}\sum \limits_{j=1}^{n}f(j)$ our choice of sequence is  valid and, therefore the decomposition is valid for any arithmetic function.
\end{proof}
\bigskip

Corollary \ref{decomposition} provides us with an important identity that allows us to decompose any double correlation of an arithmetic function into a weighted sum averaged over a certain range. This tool gives the wriggle room to estimate an average correlation and to a larger extent a single correlation of an arithmetic function. This identity is the main and one of the important ingredient in this paper, and we will lean on it in many ways as time goes by.

\begin{lemma}(Area method)\label{key 2}
Let  $f:\mathbb{N}\longrightarrow \mathbb{R}^{+}$, a real-valued function. If \begin{align}\sum \limits_{n\leq x}f(n)f(n+l_0)>0\nonumber
\end{align}then there exist some constant $\mathcal{C}:=\mathcal{C}(l_0)>0$ such that \begin{align}\sum \limits_{n\leq x}f(n)f(n+l_0)\geq  \frac{1}{\mathcal{C}(l_0)x}\sum \limits_{2\leq n\leq x}f(n)\sum \limits_{m\leq n-1}f(m).\nonumber
\end{align}
\end{lemma}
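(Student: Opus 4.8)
The plan is to reduce the single correlation at gap $l_0$ to the \emph{full} double correlation by means of the exact identity already in hand, and then to extract the prescribed gap by an averaging argument. First I would invoke Corollary \ref{decomposition} for the given nonnegative $f$, which furnishes
\[
\sum \limits_{2\leq n\leq x}f(n)\sum \limits_{m\leq n-1}f(m)=\sum \limits_{n\leq x-1}\sum \limits_{j\leq x-n}f(n)f(n+j).
\]
The right-hand side is the complete double correlation over the bounded region. Regrouping its terms according to the common gap $j$ rewrites it as $\sum_{j=1}^{x-1}S_j$, where I set $S_j:=\sum_{n\leq x-j}f(n)f(n+j)$ to be the correlation at gap $j$; the admissible gaps run exactly over $j=1,\ldots,x-1$. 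Because $f$ maps into $\mathbb{R}^{+}$, every $S_j$ is nonnegative, so the right-hand side is a sum of precisely $x-1$ nonnegative contributions, and $S_{l_0}$ is one of its terms.

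Next I would isolate the distinguished gap by a pigeonhole step. Since there are $x-1<x$ gaps and the total $\sum_{j=1}^{x-1}S_j$ is distributed among them, there is some gap $j^{*}$ with $S_{j^{*}}\geq \frac{1}{x-1}\sum_{j}S_j\geq \frac{1}{x}\sum_{j}S_j$. The hypothesis $\sum_{n\leq x}f(n)f(n+l_0)=S_{l_0}>0$ makes the ratio $S_{j^{*}}/S_{l_0}$ finite, and setting $\mathcal{C}(l_0):=S_{j^{*}}/S_{l_0}$ converts the bound on the dominant gap into a bound on the gap $l_0$ itself:
\[
S_{l_0}=\frac{1}{\mathcal{C}(l_0)}S_{j^{*}}\geq \frac{1}{\mathcal{C}(l_0)\,x}\sum \limits_{j=1}^{x-1}S_j=\frac{1}{\mathcal{C}(l_0)\,x}\sum \limits_{2\leq n\leq x}f(n)\sum \limits_{m\leq n-1}f(m),
\]
which is the asserted inequality.

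The delicate point, and the step I expect to be the main obstacle, is that $\mathcal{C}(l_0)$ should be a genuine constant independent of $x$ if the lemma is to be of any use in the subsequent equidistribution argument. The naive choice $\mathcal{C}(l_0)=S_{j^{*}}/S_{l_0}$ produced above may a priori grow with $x$, since for an \emph{arbitrary} nonnegative $f$ the correlation at one prescribed gap can be far smaller than the largest gap correlation. To secure a uniform constant I would have to prove the comparability $\max_j S_j\asymp_{l_0}S_{l_0}$ uniformly in $x$, which is exactly where the mere positivity hypothesis must be strengthened using the structure of the function actually in play, namely the Chebyshev weight $\theta$, whose shifted correlations are all of order $x$ by Lemma \ref{cheb}. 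I would therefore carry out this comparison of gap correlations as the substantive part of the argument; once it is in place, the averaging step above closes the proof with an $x$-free constant $\mathcal{C}(l_0)$.
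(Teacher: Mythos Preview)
Your argument is essentially the paper's: invoke Corollary \ref{decomposition}, rewrite the double sum as a sum of gap correlations $S_j$, compare these to $S_{l_0}$, and invert. The only cosmetic difference is that the paper skips the pigeonhole step and instead bounds each gap correlation directly by a multiple $|\mathcal{M}_j(l_0)|$ of $S_{l_0}$, sets $\mathcal{C}(l_0)=\max_j|\mathcal{M}_j(l_0)|$, and concludes that the sum of the $x$ terms is at most $\mathcal{C}(l_0)\,x\,S_{l_0}$; this produces the same effective constant $\max_j S_j/S_{l_0}$ that your route does.

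The concern you raise in your final paragraph --- that $\mathcal{C}(l_0)$ so defined may depend on $x$ --- is legitimate, and the paper's proof does not address it either: the quantities $|\mathcal{M}(l_0)|,|\mathcal{N}(l_0)|,\ldots,|\mathcal{R}(l_0)|$ are simply introduced as the ratios $S_j/S_{l_0}$ and their maximum declared to be a constant depending only on $l_0$, with no argument for uniform boundedness in $x$. So you have both reproduced the paper's argument and correctly identified a lacuna in it; the paper offers nothing further that you are missing.
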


\begin{proof}
By Corollary \ref{decomposition}, we obtain the identity by taking $f(j)=r_j=h_j$ \begin{align}\sum \limits_{n\leq x-1} \sum \limits_{j\leq x-n} f(n)f(n+j)&=\sum \limits_{2\leq n\leq x}f(n)\sum \limits_{m\leq n-1}f(m).\nonumber
\end{align}It follows that \begin{align}\sum \limits_{n\leq x-1} \sum \limits_{j\leq x-n} f(n)f(n+j)&\leq \sum \limits_{n\leq x} \sum \limits_{j\leq x} f(n)f(n+j)\nonumber \\&=\sum \limits_{n\leq x}f(n)f(n+1)+\sum \limits_{n\leq x}f(n)f(n+2)\nonumber \\&+\cdots \sum \limits_{n\leq x}f(n)f(n+l_0)+\cdots \sum \limits_{n\leq x}f(n)f(n+x)\nonumber \\&\leq  |\mathcal{M}(l_0)|\sum \limits_{n\leq x}f(n)f(n+l_0)\nonumber \\&+|\mathcal{N}(l_0)|\sum \limits_{n\leq x}f(n)f(n+l_0)\nonumber \\& +\cdots +\sum \limits_{n\leq x}f(n)f(n+l_0)+\cdots +|\mathcal{R}(l_0)|\sum \limits_{n\leq x}f(n)f(n+l_0)\nonumber \\&=\bigg(|\mathcal{M}(l_0)|+|\mathcal{N}(l_0)|+\cdots +1\nonumber \\&+\cdots +|\mathcal{R}(l_0)|\bigg)\sum \limits_{n\leq x}f(n)f(n+l_0)\nonumber \\&\leq \mathcal{C}(l_0)x\sum \limits_{n<x}f(n)f(n+l_0).\nonumber
\end{align}where $\mathrm{max}\{|\mathcal{M}(l_0)|, |\mathcal{N}(l_0)|, \ldots , |\mathcal{R}(l_0)|\}=\mathcal{C}(l_0)$. By inverting this inequality, the result follows immediately.
\end{proof}
\bigskip

\begin{theorem}
Let $z$ be fixed and let $\pi_z(x):=\# \{(p_i,p_{i+z}):p_i\leq x,~p_{i+z}\leq x\}$, then \begin{align}\pi_{z}(x)\geq (1+o(1))\frac{x}{2\mathcal{C}(z)\log^2x}\nonumber
\end{align}for some $\mathcal{C}(z)>0$.
\end{theorem}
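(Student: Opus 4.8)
The plan is to apply the Area method of Lemma \ref{key 2} to the Chebyshev weight $\theta$ with shift $l_0=z$, then reduce the resulting double sum to closed form using the elementary estimate of Lemma \ref{cheb}, and finally pass from the weighted correlation $\sum_{n\leq x}\theta(n)\theta(n+z)$ to the counting function $\pi_z(x)$ by bounding each surviving summand. Concretely, taking $f=\theta$ and $l_0=z$ in Lemma \ref{key 2} (for $x$ large enough that the left-hand side is positive) yields
\begin{align}
\sum_{n\leq x}\theta(n)\theta(n+z)\geq \frac{1}{\mathcal{C}(z)x}\sum_{2\leq n\leq x}\theta(n)\sum_{m\leq n-1}\theta(m),\nonumber
\end{align}
so everything hinges on evaluating the double sum on the right.

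Next I would evaluate that double sum. The key observation is that $\sum_{2\leq n\leq x}\theta(n)\sum_{m\leq n-1}\theta(m)$ is exactly the sum of $\theta(m)\theta(n)$ over ordered pairs with $m<n\leq x$, so by the standard symmetrization
\begin{align}
\sum_{2\leq n\leq x}\theta(n)\sum_{m\leq n-1}\theta(m)=\frac{1}{2}\Bigg[\bigg(\sum_{n\leq x}\theta(n)\bigg)^2-\sum_{n\leq x}\theta(n)^2\Bigg].\nonumber
\end{align}
By Lemma \ref{cheb} the first square equals $(1+o(1))x^2$, while the diagonal term $\sum_{n\leq x}\theta(n)^2=\sum_{p\leq x}(\log p)^2\ll x\log x$ by partial summation, which is of strictly lower order. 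Hence the double sum equals $(1+o(1))\tfrac{1}{2}x^2$, and after substituting this back and cancelling one factor of $x$, the Area method bound becomes $\sum_{n\leq x}\theta(n)\theta(n+z)\geq (1+o(1))\frac{x}{2\mathcal{C}(z)}$.

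Finally I would convert to $\pi_z(x)$. Every nonzero term $\theta(n)\theta(n+z)$ comes from a pair in which both $n$ and $n+z$ are prime with $n\leq x$, and for such $n$ we have $\theta(n)\theta(n+z)=\log n\,\log(n+z)\leq \log^2 x$ (the shift from $x$ to $x+z$ alters only finitely many terms and contributes $O(1)$ to the count, hence is harmless). Since there are $\pi_z(x)$ such pairs, this gives $\sum_{n\leq x}\theta(n)\theta(n+z)\leq \pi_z(x)\log^2 x$. Combining this upper bound with the lower bound from the previous paragraph and dividing through by $\log^2 x$ produces $\pi_z(x)\geq (1+o(1))\frac{x}{2\mathcal{C}(z)\log^2 x}$, which is the claimed inequality.

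The step I expect to be the genuine obstacle is verifying the positivity hypothesis $\sum_{n\leq x}\theta(n)\theta(n+z)>0$ that Lemma \ref{key 2} demands: this is precisely the assertion that at least one prime pair of gap $z$ lies below $x$. For odd $z>1$ it fails outright on parity grounds, so the theorem is only substantive for admissible even $z$, and even there one must ensure the bound is read as a conditional (or invoke a known existence input) before the Area method can be engaged. A secondary point requiring care is that $\mathcal{C}(z)$ emerging from Lemma \ref{key 2} be treated as a genuine constant independent of $x$, so that the division by $\mathcal{C}(z)x$ does not silently reintroduce an $x$-dependence and the $(1+o(1))$ factors propagate cleanly through the final cancellation.
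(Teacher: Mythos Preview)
Your proposal is correct and follows essentially the same route as the paper: apply Lemma~\ref{key 2} with $f=\theta$, feed in Lemma~\ref{cheb} to evaluate the bilinear sum as $(1+o(1))x^2/2$, and then pass from the weighted correlation to $\pi_z(x)$. Your write-up is in fact more explicit than the paper's---you spell out the symmetrization of the double sum and replace the paper's appeal to ``partial summation'' by the equivalent (and slightly cleaner) termwise bound $\theta(n)\theta(n+z)\le \log^2 x$---and the two caveats you flag (the positivity hypothesis and the $x$-independence of $\mathcal{C}(z)$) are exactly the soft spots the paper itself leaves unaddressed.
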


\begin{proof}
Let $z$ be fixed and choose $f:=\theta$ in Lemma \ref{key 2}, then by Lemma \ref{cheb} we obtain the lower bound \begin{align}\sum \limits_{n\leq x}\theta(n)\theta(n+z)\geq (1+o(1))\frac{x}{2\mathcal{C}(z)}\nonumber
\end{align}for some $\mathcal{C}(z)>0$.
It follows that we can write\begin{align}\sum \limits_{n\leq x}\theta(n)\theta(n+z)=\sum \limits_{p,p+z\leq x}(\log p)(\log (p+z)).\nonumber
\end{align}The claimed lower bound follows by partial summation.
\end{proof}
\bigskip

This result in and of itself is of independent interest, for it solves the old conjecture of De Poligna about the infinitude of varieties of expressing even numbers as a difference of two prime numbers. The twin prime conjecture follows from this as a particular case by taking $z=2$. This result has already been exposed in a separate  paper, and so the focus here is not to elucidate on the breakthrough but we deem it necessary in this work, since it allows us to obtain the right order of growth of prime pairs with an idealized gap when combined with various upper bound derived from the methods of sieve theory.
\bigskip

\begin{theorem}(Brun)\label{brun}
Let $\alpha \in \mathbb{Z}$ for $\alpha\neq 0$, we have \begin{align}\# \{p\leq x:|p+\alpha| \quad \mathrm{is~prime}\}\leq \frac{cx}{\log^2x}\prod \limits_{p|\alpha}\bigg(1-\frac{1}{p}\bigg)^{-1}\nonumber
\end{align}for some absolute constant $c$.
\end{theorem}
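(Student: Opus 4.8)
The plan is to obtain this as a routine application of an upper bound sieve of dimension two; I will sketch the Selberg $\Lambda^{2}$ sieve, though Brun's combinatorial sieve would serve equally well. First I would recast the count as a sifting problem: up to a bounded additive error from the primes dividing $\alpha$, the quantity $\#\{p\leq x:|p+\alpha|~\mathrm{is~prime}\}$ is at most the number of integers $n\leq x$ for which neither $n$ nor $n+\alpha$ has a prime factor not exceeding a sifting level $z$. I therefore set $\mathcal{A}=\{n(n+\alpha):1\leq n\leq x\}$, sift by the primes $p\leq z$, and take $z=x^{1/2}$ (or a small fixed power of $x$, to be adjusted against the remainder).

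The heart of the matter is the local computation. For each prime $p$ let $\omega(p)$ denote the number of residues $n\bmod p$ with $p\mid n(n+\alpha)$, the sieve density. If $p\nmid\alpha$ the two roots $n\equiv 0$ and $n\equiv-\alpha$ are distinct, so $\omega(p)=2$, whereas if $p\mid\alpha$ they coincide and $\omega(p)=1$. Thus $\mathcal{A}$ is sifted by a density-two sequence, and the congruence conditions are equidistributed with remainders satisfying $|r_{d}|\leq\omega(d)\leq 2^{\nu(d)}$, where $\nu(d)$ counts the distinct prime divisors of $d$.

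Feeding these data into the Selberg sieve yields a bound of the form $S(\mathcal{A},z)\leq X/G(z)+R$ with $X=x$, where for a dimension-two sieve the main-term sum $G(z)=\sum_{d\leq\sqrt{z}}\mu^{2}(d)\prod_{p\mid d}\frac{\omega(p)}{p-\omega(p)}$ grows like a constant times $(\log z)^{2}$ divided by the singular series, so that $X/G(z)\ll x\prod_{p\leq z}(1-\omega(p)/p)$. Splitting off the primes dividing $\alpha$ gives $\prod_{p\leq z}(1-\omega(p)/p)\asymp(\log z)^{-2}\prod_{p\mid\alpha,\,p>2}\frac{p-1}{p-2}$ (the case of odd $\alpha$ is trivial, since then $n(n+\alpha)$ is always even and the count is $O(1)$); with $z=x^{1/2}$ this is $\ll\frac{x}{\log^{2}x}\prod_{p\mid\alpha,\,p>2}\frac{p-1}{p-2}$. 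The remainder $R\ll\sum_{d\leq z}3^{\nu(d)}|r_{d}|\ll z(\log x)^{O(1)}$ is negligible against the main term once $z$ is taken as an appropriate power of $x$.

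It then remains to replace the singular-series factor $\prod_{p\mid\alpha,\,p>2}\frac{p-1}{p-2}$ by the stated $\prod_{p\mid\alpha}(1-1/p)^{-1}=\prod_{p\mid\alpha}\frac{p}{p-1}$. Using the identity $\frac{p-1}{p-2}=\frac{p}{p-1}\left(1+\frac{1}{p(p-2)}\right)$ valid for $p>2$, I obtain $\prod_{p\mid\alpha,\,p>2}\frac{p-1}{p-2}\leq\left(\prod_{p\geq 3}\left(1+\frac{1}{p(p-2)}\right)\right)\prod_{p\mid\alpha,\,p>2}\frac{p}{p-1}\ll\prod_{p\mid\alpha}(1-1/p)^{-1}$, where the product over all $p\geq 3$ converges since $\sum 1/p^{2}<\infty$ and is folded into the absolute constant $c$. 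The step I expect to be the main obstacle is the clean evaluation of $G(z)$, namely relating the truncated sum to the Euler product and extracting exactly the dimension-two logarithmic factor together with the correct local densities; everything downstream of the elementary local count $\omega(p)\in\{1,2\}$ is then bookkeeping.
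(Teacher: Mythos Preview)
Your sketch is correct and is the standard dimension-two upper-bound sieve argument for this inequality. There is nothing substantive to compare against, however, because the paper does not supply its own proof: it simply states the bound and remarks that it ``can be obtained from Brun's pure sieve,'' citing Cojocaru--Murty. So you have in fact furnished more than the paper does.

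The only discrepancy worth flagging is one of method rather than substance. The paper attributes the result to Brun's pure (combinatorial) sieve, whereas you carry it out via Selberg's $\Lambda^{2}$ sieve. As you yourself note, either works: the local computation $\omega(p)\in\{1,2\}$ and the dimension-two product evaluation are identical in both frameworks, and the only difference is how the main-term constant and remainder are packaged. Your conversion from the singular series $\prod_{p\mid\alpha,\,p>2}\frac{p-1}{p-2}$ to the stated $\prod_{p\mid\alpha}(1-1/p)^{-1}$ via the absolutely convergent product $\prod_{p\geq 3}\bigl(1+\tfrac{1}{p(p-2)}\bigr)$ is also fine and is exactly how one absorbs the discrepancy into the unspecified absolute constant $c$. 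One small parametrization wrinkle: you write $G(z)=\sum_{d\leq\sqrt{z}}(\cdots)$ and then bound the remainder by $\sum_{d\leq z}$, which is the right shape for Selberg (support level $\sqrt{z}$, remainder level $z$), but be sure when you write it up that the sifting level, the support of the $\lambda_d$, and the level in the remainder are kept consistent; with $z$ a small fixed power of $x$ everything closes as you indicate.
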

\bigskip

Theorem \ref{brun} can be obtained from Brun's pure sieve \cite{cojocaru2006introduction}. This allows us to get control on the number of prime pairs of an ideal gap less than a fixed integer $x$. Combining this result with the lower bound gives us the right order of growth of the number of prime pairs of an idealized gap. We state this result in a formal manner as follows:
\bigskip

\begin{corollary}\label{brun 2}
Let $z$ be fixed and let $\pi_z(x):=\# \{(p_i,p_{i+z}):p_i\leq x,~p_{i+z}\leq x\}$, then\begin{align}\pi_z(x)\asymp_z \frac{x}{\log^2x}\nonumber
\end{align}where the implied constants depend on $z$.
\end{corollary}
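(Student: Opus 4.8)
The plan is to obtain the claimed two-sided estimate by marrying a lower bound that is already in hand to an upper bound supplied by sieve theory. The guiding observation is that $\pi_z(x)$ is nothing but a count of pairs of primes differing by $z$, and such a count is pinned from below by the area method developed above and from above by Brun's pure sieve; the corollary is therefore essentially the juxtaposition of these two inputs, together with a short verification that all constants become genuine once $z$ is fixed.

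First I would dispose of the lower bound, which requires no new work. The immediately preceding Theorem already furnishes
\[
\pi_z(x)\geq (1+o(1))\frac{x}{2\mathcal{C}(z)\log^2 x}
\]
for some constant $\mathcal{C}(z)>0$ depending only on $z$. Since $z$ is held fixed, the factor $\tfrac{1}{2\mathcal{C}(z)}$ is a fixed positive constant, and absorbing the $(1+o(1))$ into the implied constant for $x$ sufficiently large yields $\pi_z(x)\gg_z \tfrac{x}{\log^2 x}$.

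Next I would establish the matching upper bound. The point is that each pair counted by $\pi_z(x)$ arises from a prime $p\leq x$ for which $p+z$ is also prime, so that $\pi_z(x)\leq \#\{p\leq x: p+z \text{ is prime}\}$ (dropping the auxiliary constraint $p+z\leq x$ can only enlarge the count). Applying Theorem \ref{brun} with $\alpha=z$ then gives
\[
\pi_z(x)\leq \frac{cx}{\log^2 x}\prod_{p\mid z}\left(1-\frac{1}{p}\right)^{-1}
\]
for an absolute constant $c$. Because $z$ is fixed it has only finitely many prime divisors, so the product $\prod_{p\mid z}\left(1-\tfrac{1}{p}\right)^{-1}$ is a finite constant depending only on $z$; hence $\pi_z(x)\ll_z \tfrac{x}{\log^2 x}$. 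Combining the two displays gives $\pi_z(x)\asymp_z \tfrac{x}{\log^2 x}$, as required.

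As for where the difficulty sits: there is no genuine obstacle remaining at this stage, since both the hard inputs have already been assembled — the lower bound through the area method (Lemma \ref{key 2}) and the Chebyshev-type estimate (Lemma \ref{cheb}), and the upper bound through Brun's sieve. The only point demanding minor care is the bookkeeping of the $z$-dependence, namely confirming that $\mathcal{C}(z)$ and $\prod_{p\mid z}\left(1-\tfrac{1}{p}\right)^{-1}$ are honest constants once $z$ is frozen, so that they may legitimately be folded into the $\asymp_z$ notation; this is immediate from the finiteness of the set of prime divisors of a fixed $z$.
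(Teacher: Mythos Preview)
Your proposal is correct and follows exactly the route the paper itself takes: the paper does not give a separate proof of this corollary but simply states that it follows by ``combining this result with the lower bound,'' i.e.\ pairing the lower bound from the preceding theorem (via the area method) with the Brun upper bound of Theorem~\ref{brun}. Your write-up just makes this combination explicit, including the observation that the $z$-dependent factors $\mathcal{C}(z)$ and $\prod_{p\mid z}(1-1/p)^{-1}$ are fixed constants once $z$ is frozen.
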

\bigskip

It follows that establishing equidistribution among the prime pairs of an arbitrary gap, $z$ say, reduces to establishing the asymptotic
\begin{conjecture}
\begin{align}\pi_z(x;\mathcal{N}_q(a,b),q)\sim \frac{x\mathcal{D}(z)}{\phi(q)\log^2x}\nonumber
\end{align}for some constant $\mathcal{D}(z)>0$ and where $\phi(q)=\# \{\mathcal{N}_q(a_i,b_i)\}$. 
\end{conjecture}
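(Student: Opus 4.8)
The plan is to separate the assertion into an equidistribution statement and a counting statement, and to recognise that only the latter carries real difficulty. Write $\pi_z(x)=\sum_{\mathcal{N}_q(a,b)}\pi_z(x;\mathcal{N}_q(a,b),q)$ for the global count of prime pairs of gap $z$, the sum running over the $\phi(q)$ coset classes enumerated in Proposition \ref{coset}. The target factors as the conjunction of (A) the global asymptotic $\pi_z(x)\sim \mathcal{D}(z)\,x/\log^2 x$ with a genuine positive limiting constant, and (B) the relative equidistribution $\pi_z(x;\mathcal{N}_q(a,b),q)\sim \pi_z(x)/\phi(q)$, uniformly for $q\leq(\log x)^A$. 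I would obtain the conjecture by establishing (B) through the machinery already assembled here and then multiplying by (A).

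For (B) I would run the orthogonality argument of the main theorem at the level of the $\theta$-weighted count. The area-method identity of Corollary \ref{decomposition} rewrites the double correlation as $\sum_{2\leq n\leq x}\theta(n)\sum_{m\leq n-1}\theta(m)$, in which the inner Chebyshev sum, once restricted to a coset class, is controlled by Siegel--Walfisz; expanding the restriction to $\mathcal{N}_q(a,b)$ in lattice characters and bounding the non-principal contributions then distributes the weighted pairs uniformly across the $\phi(q)$ classes, which is exactly the content of the main theorem. Passing from the weighted count $\Psi_z(x;\mathcal{N}_q(a,b),q)$ to the bare count $\pi_z(x;\mathcal{N}_q(a,b),q)$ is then a routine partial summation: writing the class count as a Stieltjes integral against $\Psi_z(t;\mathcal{N}_q(a,b),q)$ with kernel $1/((\log t)(\log(t+z)))$ and integrating by parts produces the factor $\int_2^x dt/((\log t)(\log(t+z)))\sim x/\log^2 x$, while the exponential error $O(x\,e^{-c\sqrt{\log x}})$ integrates harmlessly against the slowly varying kernel. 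This step fixes the normalisation and shows $\mathcal{D}(z)=\Theta(z)/2$.

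The entire obstacle is concentrated in (A): the existence of the limit $\lim_{x\to\infty}\pi_z(x)\log^2 x/x=\mathcal{D}(z)$ with $\mathcal{D}(z)>0$. This is precisely the binary Hardy--Littlewood (de Polignac) asymptotic, in which the constant is forced to be the singular series $\mathcal{D}(z)=\mathcal{S}(z)=2C_2\prod_{p\mid z,\,p>2}\frac{p-1}{p-2}$, with $C_2$ the twin-prime constant. The tools available here only bracket this quantity: the area method of Lemma \ref{key 2} yields the lower bound $\pi_z(x)\gg_z x/\log^2 x$ through the unspecified constant $\mathcal{C}(z)$, Brun's sieve of Theorem \ref{brun} yields the matching upper bound $\pi_z(x)\ll_z x/\log^2 x$, and Corollary \ref{brun 2} records the resulting two-sided estimate $\pi_z(x)\asymp_z x/\log^2 x$. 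Collapsing this multiplicative gap to a single limiting constant is the step that no present method reaches; indeed, since summing the claimed per-class asymptotic over the $\phi(q)$ classes reproduces $\pi_z(x)\sim\mathcal{D}(z)x/\log^2 x$, the statement is at least as hard as Hardy--Littlewood.

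Accordingly the honest status of the assertion is conditional, which is why it is recorded as a conjecture. Granting the global asymptotic (A), step (B) and the partial summation above deliver $\pi_z(x;\mathcal{N}_q(a,b),q)\sim x\mathcal{D}(z)/(\phi(q)\log^2 x)$ for every $q\leq(\log x)^A$; unconditionally the same route yields only the two-sided bound $\pi_z(x;\mathcal{N}_q(a,b),q)\asymp_z x/(\phi(q)\log^2 x)$. I expect the Hardy--Littlewood barrier in (A) to be the decisive difficulty, while the equidistribution (B), by contrast, is within the reach of the orthogonality and area-method framework developed above.
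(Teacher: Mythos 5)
Your diagnosis of the statement's status is essentially correct and, notably, more honest than the paper's own treatment. The paper does not leave this conjecture open: it reformulates it as Conjecture \ref{form2} and then claims an unconditional proof in the main theorem, whose single decisive step is the equality (\ref{important}), where the fixed-shift correlation $\sum_{n\le x}\Lambda(n)\Lambda(n+z)\kappa((n,n+z))$ is replaced by $\frac{\Theta(z)}{x}\sum_{\chi}\overline{\chi(a)\chi(b)}\sum_{2\le n\le x}\Lambda\cdot\chi(n)\sum_{m\le n-1}\Lambda\cdot\chi(m)+O_{z,q}(1)$, justified only by the phrase ``feasible by the area method.'' That is precisely your step (A) smuggled in: Lemma \ref{key 2} is stated for nonnegative real-valued $f$ and delivers only a one-sided inequality with an unspecified constant $\mathcal{C}(l_0)$, obtained by comparing all shifts at once, never an asymptotic equality with a single constant $\Theta(z)$; asserting such an equality already for the principal character is equivalent to the Hardy--Littlewood/de Polignac asymptotic you isolate. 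Your reading $\mathcal{D}(z)=\Theta(z)/2$, reconciling the conjecture's normalisation with the final theorem's $2\phi(q)$, is also consistent. So on the negative side your assessment correctly locates where the paper's argument fails.

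However, your positive claim --- that the relative equidistribution (B) ``is within the reach of the orthogonality and area-method framework'' --- has the same gap, not a smaller one. After expanding the class indicator in lattice characters, (B) requires asymptotics, or at least cancellation, in the twisted fixed-gap correlations $\sum_{n\le x}\Lambda(n)\Lambda(n+z)\chi(n(n+z))$ for each non-principal $\chi$. The area method cannot touch these: Corollary \ref{decomposition} is an identity for the correlation \emph{averaged over all shifts} $j\le x-n$, and the passage from that average to the single shift $j=z$ is exactly the collapse-of-constants step you yourself declare unreachable in (A); moreover Lemma \ref{key 2} is inapplicable here because $\Lambda\cdot\chi$ is complex-valued, so even the one-sided comparison breaks down. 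Siegel--Walfisz (Lemma \ref{equidistribution}) controls the factorized bilinear sum $\sum_{2\le n\le x}\Lambda\cdot\chi(n)\sum_{m\le n-1}\Lambda\cdot\chi(m)$, i.e.\ the shift-averaged object, but that never isolates gap $z$. Consequently your unconditional fallback $\pi_z(x;\mathcal{N}_q(a,b),q)\asymp_z x/(\phi(q)\log^2x)$ is also unjustified on the lower-bound side: Theorem \ref{brun} restricted to a class does give the upper bound, but the lower bound of Lemma \ref{key 2} is global, and pigeonhole places many pairs in \emph{some} class, not in every class. The honest unconditional output of the assembled machinery is the global bracket of Corollary \ref{brun 2} together with per-class upper bounds; both (A) and (B) remain open, which is why the statement is rightly recorded as a conjecture --- a conclusion your proposal reaches for (A) but should extend to (B) as well.
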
This is the theme of this paper, and every development thereof will be geared towards establishing this asymptotic, since we now know the right order of growth of prime pairs of an arbitrary gap.
\bigskip

Corollary \ref{brun 2} can also be expressed in terms of the Von mangoldt function by combining the area method with the upper bound
 
\begin{theorem}
The inequality is valid \begin{align}\sum \limits_{n\leq x}\Lambda(n)\Lambda(n+z)\ll \Im(z)x\nonumber
\end{align}where \begin{align}\Im(z)=2\Pi_2 \prod \limits_{p|z:p>2}\frac{p-1}{p-2}\nonumber 
\end{align}where $\Pi_2=\prod \limits_{p>2}\bigg(1-\frac{1}{(p-1)^2}\bigg)$ is the twin prime constant.
\end{theorem}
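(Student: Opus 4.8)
The plan is to reduce the von Mangoldt correlation to a count of ordinary prime pairs $(p,p+z)$, to which Brun's upper bound (Theorem \ref{brun}) applies directly, and then to verify that the arithmetic factor produced by Brun's sieve is dominated by a constant multiple of the singular series $\Im(z)$. Since only an upper bound is sought, the argument can afford to be lossy at the level of the local factors, so the comparison with $\Im(z)$ need only be accurate up to an absolute constant.

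First I would dispose of the prime-power terms. As $\Lambda(n)$ is supported on prime powers and $\sum_{p^k\le x,\,k\ge 2}\log p\ll \sqrt{x}\log x$, the contribution to $\sum_{n\le x}\Lambda(n)\Lambda(n+z)$ from those $n$ for which $n$ or $n+z$ is a proper prime power is $O(\sqrt{x}\log^2 x)$, which is absorbed into the error. Hence, up to this negligible term,
\[
\sum_{n\le x}\Lambda(n)\Lambda(n+z)=\sum_{\substack{p\le x\\ p+z\ \mathrm{prime}}}(\log p)\log(p+z)+O\bigl(\sqrt{x}\log^2 x\bigr).
\]
Bounding each logarithm trivially by $\log p\le \log x$ and $\log(p+z)\le\log(x+z)\ll\log x$ gives
\[
\sum_{n\le x}\Lambda(n)\Lambda(n+z)\ll (\log x)^2\,\#\{p\le x:\ p+z\ \mathrm{is\ prime}\}+\sqrt{x}\log^2 x.
\]

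Next I would invoke Theorem \ref{brun} with $\alpha=z$, which yields $\#\{p\le x:\ p+z\ \mathrm{prime}\}\le \frac{cx}{\log^2 x}\prod_{p\mid z}(1-1/p)^{-1}$ for an absolute constant $c$. Substituting, the two powers of $\log x$ cancel and we obtain $\sum_{n\le x}\Lambda(n)\Lambda(n+z)\ll x\prod_{p\mid z}(1-1/p)^{-1}$. It then remains to compare the Brun product $B(z):=\prod_{p\mid z}(1-1/p)^{-1}$ with $\Im(z)$. For even $z$ one has $B(z)=2\prod_{p\mid z,\,p>2}\frac{p}{p-1}$ and $\Im(z)=2\Pi_2\prod_{p\mid z,\,p>2}\frac{p-1}{p-2}$, whence
\[
\frac{B(z)}{\Im(z)}=\frac{1}{\Pi_2}\prod_{\substack{p\mid z\\ p>2}}\frac{p(p-2)}{(p-1)^2}=\frac{1}{\Pi_2}\prod_{\substack{p\mid z\\ p>2}}\Bigl(1-\frac{1}{(p-1)^2}\Bigr)\le\frac{1}{\Pi_2},
\]
since every factor is at most $1$. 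Thus $B(z)\le \Pi_2^{-1}\,\Im(z)\ll\Im(z)$, giving the claimed bound with an \emph{absolute} implied constant. The case of odd $z$ is immediate, for then $p$ and $p+z$ have opposite parity, the prime-pair count is $O(1)$, and the left side is $O(\log^2 x)\ll\Im(z)\,x$.

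The work above is light precisely because Brun's theorem is assumed; the genuine obstacle is hidden inside it. The main difficulty I would expect in a from-scratch derivation is the sieve estimate itself — running Selberg's upper-bound sieve on the sequence $\{n(n+z):n\le x\}$, evaluating the resulting quadratic-form main term, and checking that it reproduces $\Im(z)$ up to the unavoidable parity-barrier constant. A secondary technical point, handled above, is keeping the implied constant independent of $z$: the comparison of $\frac{p}{p-1}$ with $\frac{p-1}{p-2}$ loses only the convergent factor $\prod_{p>2}(1-(p-1)^{-2})=\Pi_2$, which is absolute. I would therefore present the Brun-based argument as the main proof and merely remark that Selberg's sieve delivers the same conclusion, indeed with an explicit constant, should a self-contained derivation be preferred.
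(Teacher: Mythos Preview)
The paper does not supply a proof of this theorem; it is stated as a known sieve upper bound (introduced with the phrase ``can also be expressed in terms of the Von Mangoldt function by combining the area method with the upper bound'') and then used without further justification. Your argument is therefore not to be compared against a paper proof but rather assessed on its own merits.

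Your derivation is correct. The reduction from $\Lambda$ to primes via the $O(\sqrt{x}\log^2 x)$ prime-power bound is standard, the application of Theorem~\ref{brun} with $\alpha=z$ is exactly what the paper intends the reader to do, and your comparison of the Brun product $\prod_{p\mid z}(1-1/p)^{-1}$ with the singular series $\Im(z)$ is clean: the identity $\frac{p(p-2)}{(p-1)^2}=1-\frac{1}{(p-1)^2}$ shows the ratio is bounded by the absolute constant $\Pi_2^{-1}$, so the implied constant is independent of $z$ as required. The disposal of odd $z$ by parity is also fine. In short, you have supplied precisely the argument the paper omits, and done so with more care about the uniformity in $z$ than the surrounding text exhibits.
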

\bigskip

This eventually yields another reformulation of the problem in the form

\begin{conjecture}\label{form2}
\begin{align}\Psi_z(x;\mathcal{N}_q(a,b),q)\sim \frac{x\mathcal{D}(z)}{\phi(q)}\nonumber
\end{align}for some constant $\mathcal{D}(z)>0$ and where $\phi(q)=\# \{(a,b):(p_i,p_{i+z})\in \mathcal{N}_q(a,b)\}$. 
\end{conjecture}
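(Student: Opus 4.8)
The plan is to detect membership in the coset lattice congruence class $\mathcal{N}_q(a,b)$ by orthogonality and then to collapse the resulting two-point correlation into a product of one-point sums by means of the area method, at which stage the Siegel--Walfisz theorem stated in the introduction will supply both the main term and the error. Since, by the Definition of $\simeq$, a pair $(n,n+z)$ lies in $\mathcal{N}_q(a,b)$ precisely when $n(n+z)\equiv ab\pmod q$ together with $\gcd(n(n+z),q)=1$, I would first write
\begin{align}
\Psi_z(x;\mathcal{N}_q(a,b),q)=\sum_{\substack{n\leq x\\ n(n+z)\equiv ab\,(q)\\ \gcd(n(n+z),q)=1}}\Lambda(n)\Lambda(n+z).\nonumber
\end{align}
The two-dimensional character alluded to in the Overview is then nothing but the product $\chi(n)\chi(n+z)$ of an ordinary Dirichlet character with itself, and orthogonality of the characters modulo $q$ converts the congruence condition into the exact identity
\begin{align}
\Psi_z(x;\mathcal{N}_q(a,b),q)=\frac{1}{\phi(q)}\sum_{\chi\bmod q}\overline{\chi}(ab)\sum_{n\leq x}\Lambda(n)\chi(n)\,\Lambda(n+z)\chi(n+z).\nonumber
\end{align}

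The principal character $\chi_0$ isolates the main term. Here $\sum_{n\leq x}\Lambda(n)\Lambda(n+z)$, restricted to $\gcd(n(n+z),q)=1$, is of exact order $x$ by Corollary \ref{brun 2} together with the lower bound furnished by the area method (Lemma \ref{key 2}) and the upper bound of Theorem \ref{brun}; extracting its leading constant and absorbing the factor $1/2$ coming from our convention of working in the half Cartesian plane (as in Lemma \ref{starter}) yields the claimed $\frac{\Theta(z)}{2\phi(q)}x$ with $\Theta(z)>0$ the relevant singular-series constant. The decisive structural input is Corollary \ref{decomposition}: applied to $g(n)=\Lambda(n)\chi(n)$ it replaces the summed-over-all-gaps correlation $\sum_{n\leq x-1}\sum_{j\leq x-n}g(n)g(n+j)$ by the bilinear expression $\sum_{2\leq n\leq x}g(n)\sum_{m\leq n-1}g(m)$, in which the two factors separate into single prime sums twisted by $\chi$, precisely the form amenable to Siegel--Walfisz.

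For the error term I would treat the non-principal characters. For each non-principal $\chi$ modulo $q$ with $q\leq(\log x)^A$, Siegel--Walfisz gives $\sum_{n\leq x}\Lambda(n)\chi(n)\ll x\,e^{-c\sqrt{\log x}}$. Feeding this into the separated bilinear form produced by Corollary \ref{decomposition}, each non-principal contribution factors into a product in which at least one factor carries the saving $e^{-c\sqrt{\log x}}$; partial summation then transfers this back to $\sum_{n\leq x}\Lambda(n)\chi(n)\Lambda(n+z)\chi(n+z)\ll x\,e^{-c\sqrt{\log x}}$. Summing trivially over the at most $\phi(q)\leq q\leq(\log x)^A$ non-principal characters and dividing by $\phi(q)$ keeps the total error within $O\!\left(x\,e^{-c'\sqrt{\log x}}\right)$, the polynomial factor $(\log x)^A$ being absorbed into the exponential at the cost of slightly decreasing the constant. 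The asymptotic form $\Psi_z\sim \frac{x\mathcal{D}(z)}{2\phi(q)}$ then follows on setting $\mathcal{D}(z)=\Theta(z)$ and letting $x\to\infty$.

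The genuinely delicate point, which I expect to be the main obstacle, is the passage for non-principal $\chi$ from the bilinear form $\sum_{2\leq n\leq x}g(n)\sum_{m\leq n-1}g(m)$ back to the single fixed-gap correlation $\sum_{n\leq x}g(n)g(n+z)$. Corollary \ref{decomposition} is an identity only for the \emph{full} sum over all gaps $j$, while Lemma \ref{key 2} is proved for nonnegative $f$ and furnishes only a one-sided comparison, so using it with the complex weight $g(n)=\Lambda(n)\chi(n)$ demands a separate argument showing that the character oscillation does not corrupt the reduction; an unrestricted two-point correlation of $\Lambda$ twisted by characters is otherwise inaccessible by current technology, so this step must be executed with great care and is where the entire argument stands or falls. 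A secondary but necessary burden is maintaining uniformity in $q$ throughout the range $q\leq(\log x)^A$, so that after summation over all characters the accumulated error retains the stated Siegel--Walfisz quality.
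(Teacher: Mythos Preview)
Your plan mirrors the paper's own proof almost exactly: orthogonality of the lattice characters to detect the coset class, reduction to $\sum_{n\le x}\Lambda\chi(n)\,\Lambda\chi(n+z)$, the area-method passage to the bilinear form $\sum_{2\le n\le x}\Lambda\chi(n)\sum_{m\le n-1}\Lambda\chi(m)$, and then Lemma~\ref{equidistribution} (Siegel--Walfisz) on the separated sums followed by partial summation. The only cosmetic difference is that the paper applies the area method uniformly to all characters in one stroke and then extracts the main term via $E_0(\chi)$, whereas you first isolate the principal character and invoke Corollary~\ref{brun 2} for its size; the architecture is the same.

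Your closing paragraph, however, lands exactly on the step the paper does not justify. At the passage labeled \eqref{important} the paper asserts
\[
\sum_{n\le x}\Lambda\chi(n)\,\Lambda\chi(n+z)
=\frac{\Theta(z)}{x}\sum_{2\le n\le x}\Lambda\chi(n)\sum_{m\le n-1}\Lambda\chi(m)+O_{z,q}(1),
\]
adding only ``which is feasible by the area method and for some $\Theta(z)>0$''. But, as you correctly note, Lemma~\ref{key 2} is proved only for nonnegative real $f$ and yields a one-sided inequality, while Corollary~\ref{decomposition} is an identity only after summing over \emph{all} gaps $j\le x-n$; neither tool produces a two-sided estimate, with a single constant $\Theta(z)$ independent of $\chi$, for the fixed-gap correlation with the complex weight $\Lambda\chi$. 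The paper does not supply the ``separate argument'' you flag as necessary, so the obstacle you anticipated is present in the paper's proof as well and is not resolved there. Your diagnosis of where the argument stands or falls is accurate.
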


\begin{remark}
Conjecture \ref{form2} is the form we will pursue in this paper.
\end{remark}

\begin{definition}[The $\omega$ function]
By the $\omega(q)$-function, we mean the function of the form \begin{align}\omega(q):=\# \{(a,b)\pmod q:(n_1,n_2)\equiv (a,b),~\gcd(n_1,q)=\gcd(n_2,q)=1\}.\nonumber
\end{align}
\end{definition}
\bigskip

\begin{theorem}\label{actual count}
Let $q\in \mathbb{N}$ , then we have \begin{align}\omega(q)=\frac{\phi(q)(\phi(q)-1)}{2}\nonumber
\end{align}where $\phi(q)$ is the euler-totient function.
\end{theorem}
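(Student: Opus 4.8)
The plan is to reduce the evaluation of $\omega(q)$ to a purely combinatorial count that runs in close parallel to the proof of Lemma \ref{starter}. By the definition of the Euler totient there are exactly $\phi(q)$ residue classes modulo $q$ that are coprime to $q$, which I would enumerate as $a_1, a_2, \ldots, a_{\phi(q)}$. Since a representative $(n_1, n_2) \equiv (a, b) \pmod q$ satisfies $\gcd(n_1, q) = \gcd(a, q)$ and $\gcd(n_2, q) = \gcd(b, q)$, the condition appearing in the definition of $\omega(q)$ is equivalent to $\gcd(a, q) = \gcd(b, q) = 1$. Hence $\omega(q)$ is precisely the number of pairs of coprime residues, and the whole task collapses to a counting problem over the enumerated list $a_1, \ldots, a_{\phi(q)}$.

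Working within the half Cartesian plane, as adopted throughout the paper, I would count unordered pairs of distinct coprime residues by forming the disjoint family
\begin{align}
\mathcal{C} = \bigcup_{j=1}^{\phi(q)-1} \{(a_j, a_{j+1}), (a_j, a_{j+2}), \ldots, (a_j, a_{\phi(q)})\},\nonumber
\end{align}
exactly in the spirit of the set $\mathcal{C}$ built in Lemma \ref{starter}. The slices indexed by distinct values of $j$ are pairwise disjoint, so their cardinalities simply add, and I would conclude
\begin{align}
\omega(q) = \#\mathcal{C} = \sum_{j=1}^{\phi(q)-1}(\phi(q) - j) = \frac{\phi(q)(\phi(q)-1)}{2}.\nonumber
\end{align}
Equivalently, one may start from the $\phi(q)^2$ ordered pairs of coprime residues, subtract the $\phi(q)$ diagonal pairs $(a_i, a_i)$, and halve the result to pass to the half plane; both routes deliver the binomial count $\binom{\phi(q)}{2}$.

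I expect the only genuinely delicate point, and hence the main obstacle, to be interpretational rather than analytic. The displayed definition of $\omega(q)$ does not on its face record whether the pairs $(a,b)$ are ordered and whether the diagonal $a = b$ is permitted, yet the target formula $\frac{\phi(q)(\phi(q)-1)}{2}$ forces the unordered, distinct reading. The plan is to make this reading explicit by appealing to the half-plane convention declared in the Notations section, under which a pair and its mirror image are identified and coincident coordinates are excluded; once this is pinned down, the count is forced and no further input is required. A secondary point to watch is that $\omega(q)$ counts coprime residue pairs directly and must not be conflated with the count of coset lattice congruence classes, which Proposition \ref{coset} already shows to equal $\phi(q)$ rather than $\binom{\phi(q)}{2}$.
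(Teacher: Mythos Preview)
Your proposal is correct and follows essentially the same route as the paper: both arguments reduce $\omega(q)$ to counting unordered pairs of distinct residues coprime to $q$ via the half-plane convention, build the identical disjoint union $\mathcal{C}=\bigcup_{j=1}^{\phi(q)-1}\{\,\cdot\,\}$ indexed by the first coordinate, and sum the slice sizes to obtain $\binom{\phi(q)}{2}$. Your explicit discussion of why the unordered, off-diagonal reading is forced is a welcome clarification that the paper leaves implicit in its geometric dressing.
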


\begin{proof}
Consider the positive Cartesian plane and slice it into two by a straight line so that the lower half contains finitely many points. Let $\omega(q)$ denotes the set of all pairs in this finite region whose entries are coprime to $q$. Next let us rotate this region clockwise and drop down a pependicular from the origin to the line. By letting $M$ and $N$ be the horizontal and the vertical boundary of the lower region, we set $|M|=|N|=q$ and consider coprime integers along the horizontal and the vertical boundary of the bounded region given by  $M_q=\{n\leq q: \gcd(n,q)=1\}$ and $N_q=\{n\leq q:\gcd(n,q)=1\}$. More explicitly we consider the sets \begin{align}M_q=\{n_1,n_2,\ldots, n_{\phi(q)}\}\nonumber
\end{align}and \begin{align}N_q=\{n_1,n_2,\ldots,n_{\phi(q)}\}.\nonumber
\end{align} Now we count the number of pairs obeying such a property in any half of the region, since by symetry the same count holds for the other half.  Let us count the number of classes of pairs in the lower half of the bounded region, which was a priori the upper half of the bounded region. To wit, we construct the set \begin{align}\mathcal{C}=\bigcup \limits_{j=1}^{\phi(q)-1}\{n_j\cdot n_{j+1},\ldots, n_{j}\cdot n_{\phi(q)}\}.\nonumber
\end{align}It follows that the number of classes of pairs  in ths bounded region is given by \begin{align}\# \mathcal{C} =\bigg|\bigcup \limits_{j=1}^{\phi(q)-1}\{n_j\cdot n_{j+1},\ldots, n_{j}\cdot n_{\phi(q)}\}\bigg|.\nonumber
\end{align}Since $\{n_i\cdot n_{i+1},\ldots,n_i\cdot n_{\phi(q)}\}\cap \{n_j\cdot n_{j+1},\ldots,n_{j}\cdot n_{\phi(q)}\}=\emptyset$ for $i\neq j$, it follows that \begin{align}\# \mathcal{C}&=\sum \limits_{j=1}^{\phi(q)-1}\# \{n_j\cdot n_{j+1},\ldots, n_j\cdot n_{\phi(q)}\}\nonumber \\&=\frac{\phi(q)(\phi(q)-1)}{2}\nonumber
\end{align}and the result follows immediately.
\end{proof}

\begin{corollary}\label{amountincoset}
Let $\gcd(a,q)=\gcd(b,q)=1$ and $\mathcal{N}=\{(n_1,n_2): (n_1,n_2)\simeq (a,b)\}$, then we have \begin{align}\# \mathcal{N}\leq \bigg \lceil \frac{\phi(q)-1}{2}\bigg \rceil.\nonumber
\end{align}
\end{corollary}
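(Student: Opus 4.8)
The plan is to evaluate $\#\mathcal{N}$ exactly by realizing the pairs of $\mathcal{N}$ as the orbits of an involution on the group of units, and then to read off the bound. Write $c\equiv ab\pmod q$; since $\gcd(a,q)=\gcd(b,q)=1$ this $c$ is a unit modulo $q$. Consistent with the half Cartesian plane convention used in Theorem \ref{actual count} (where the total count came out to $\tfrac12\phi(q)(\phi(q)-1)$), I read $\mathcal{N}$ as the set of unordered pairs $\{n_1,n_2\}$ of \emph{distinct} residues coprime to $q$ with $n_1n_2\equiv c\pmod q$.

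The key step is to introduce the map $\iota\colon (\mathbb{Z}/q\mathbb{Z})^{\times}\to(\mathbb{Z}/q\mathbb{Z})^{\times}$ defined by $\iota(x)=cx^{-1}$. One checks immediately that $\iota(\iota(x))=c(cx^{-1})^{-1}=x$, so $\iota$ is an involution on the $\phi(q)$ units. A unit $x$ is a fixed point precisely when $x^{2}\equiv c\pmod q$, while every $2$-element orbit $\{x,cx^{-1}\}$ is exactly an unordered pair of distinct units whose product is $c$, i.e.\ an element of $\mathcal{N}$; conversely every element of $\mathcal{N}$ arises this way. Counting orbits therefore gives the exact identity $\phi(q)=s_c+2\,\#\mathcal{N}$, where $s_c:=\#\{x\bmod q:\gcd(x,q)=1,\ x^{2}\equiv c\}\ge 0$.

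From this identity, $\#\mathcal{N}=\tfrac12(\phi(q)-s_c)\le \tfrac12\phi(q)$. For $q\ge 3$ the quantity $\phi(q)$ is even, so $\tfrac12\phi(q)=\lceil(\phi(q)-1)/2\rceil$, which is the claimed bound; the cases $q\in\{1,2\}$ are degenerate, with $\mathcal{N}=\emptyset$ and the right-hand side equal to $0$. Note moreover that the bound is attained exactly when $c=ab$ is a quadratic non-residue modulo $q$ (so $s_c=0$).

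The only genuinely delicate points are fixing the counting convention (unordered pairs with distinct entries, matching Theorem \ref{actual count}) and correctly accounting for the diagonal, i.e.\ the fixed points of $\iota$, which are the square roots of $c$; the ceiling in the statement is precisely what absorbs the parity of $\phi(q)-s_c$. As a consistency check one may sum over the $\phi(q)$ classes of Proposition \ref{coset}: since $\sum_c s_c=\phi(q)$ (each unit $x$ being counted against the single class $c=x^{2}$), one recovers $\sum_{\mathcal{N}}\#\mathcal{N}=\tfrac12(\phi(q)^2-\phi(q))=\omega(q)$, in agreement with Theorem \ref{actual count}.
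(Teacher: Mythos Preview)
Your argument is correct and takes a genuinely different route from the paper. The paper's proof simply cites Theorem \ref{actual count} (giving $\omega(q)=\tfrac12\phi(q)(\phi(q)-1)$ pairs in total) together with Proposition \ref{coset} (there are $\phi(q)$ coset classes) and declares the bound ``follows immediately''; this is an averaging step, and as stated it only bounds the \emph{minimum} class size by $\tfrac{\phi(q)-1}{2}$, not the maximum, so the paper's deduction is in fact incomplete. Your involution $\iota(x)=cx^{-1}$ on $(\mathbb{Z}/q\mathbb{Z})^{\times}$ does strictly more: it produces the exact formula $\#\mathcal{N}=\tfrac12(\phi(q)-s_c)$, which yields the uniform upper bound $\lceil(\phi(q)-1)/2\rceil$ for every class (using that $\phi(q)$ is even for $q\ge 3$), identifies the cases of equality ($ab$ a non-residue), and via your consistency check $\sum_c s_c=\phi(q)$ recovers Theorem \ref{actual count}. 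You were also right to flag and resolve the counting convention (unordered pairs of distinct residues), since under the ordered-pair reading each class would have exactly $\phi(q)$ elements and the inequality would fail. In short, your proof is a legitimate strengthening of what the paper actually writes.
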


\begin{proof}
By Theorem \ref{actual count}, the omega function $\omega$ has the measure $\omega(q)=\frac{\phi(q)(\phi(q)-1)}{2}$. Appealing to Proposition \ref{coset}, the result follows immediately.
\end{proof}
\bigskip

Corollary \ref{amountincoset} puts a limit to the number of equivalent pairs that could possibly reside in each coset. We cannot in actuality opine on the exact count of these pairs let alone to compare their distribution. Nonetheless, we do not worry much about this lapse, since it does not interfere with our result.

\begin{lemma}\label{equidistribution}
Let $c_1>0$ be some constant.  For any $A>0$ there is some  $x(A)>0$ such that if $q<(\log x)^A$, then for $x\geq x(A)$ we have \begin{align}\Psi(x,\chi)=E_0(\chi)x+O\bigg(\frac{x}{e^{c_1\sqrt{\log x}}}\bigg)\nonumber
\end{align}where \begin{align}E_0(\chi)=\begin{cases}1 \quad \mathrm{if} \quad \chi=\chi_0\\0 \quad \mathrm{if} \quad \chi \neq \chi_0.\end{cases}\nonumber
\end{align}
\end{lemma}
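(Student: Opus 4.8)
The plan is to recognize Lemma \ref{equidistribution} as the character-sum formulation of the Siegel--Walfisz theorem and to prove it by the classical route through the explicit formula for Dirichlet $L$-functions. Writing $\Psi(x,\chi)=\sum_{n\le x}\Lambda(n)\chi(n)$, I would first apply a truncated Perron formula to represent
\begin{align}
\Psi(x,\chi)=\frac{1}{2\pi i}\int_{\kappa-iT}^{\kappa+iT}\left(-\frac{L'}{L}(s,\chi)\right)\frac{x^s}{s}\,ds+O\left(\frac{x(\log x)^2}{T}\right),\nonumber
\end{align}
with $\kappa=1+1/\log x$ and $T$ a free parameter to be fixed later (ultimately of size $\exp(\sqrt{\log x})$).

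Next I would shift the contour to a vertical line $\sigma=1-\eta$ lying to the left of $\Re(s)=1$ and collect residues. For $\chi=\chi_0$ the logarithmic derivative $-L'/L(s,\chi_0)$ inherits the simple pole of the Riemann zeta function at $s=1$, and the residue of $(-L'/L)(s,\chi_0)\,x^s/s$ there produces the main term $x$; for $\chi\ne\chi_0$ there is no pole at $s=1$ and hence no main term, which accounts precisely for the indicator $E_0(\chi)$. The remaining contribution is controlled by the nontrivial zeros $\rho=\beta+i\gamma$ of $L(s,\chi)$ through the sum $-\sum_{|\gamma|\le T}x^\rho/\rho$, which must be shown to be $\ll x\exp(-c_1\sqrt{\log x})$.

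The decisive input is the zero-free region. I would invoke the standard region in which $L(s,\chi)\ne 0$ for $\sigma>1-c/\log(q(|t|+2))$, valid for every zero save possibly a single real (Siegel) zero $\beta_1$, and combine it with the classical bound for the number of zeros up to height $T$. Choosing $T=\exp(\sqrt{\log x})$ and using the hypothesis $q<(\log x)^A$ forces every non-exceptional zero to satisfy $\beta\le 1-c'/\sqrt{\log x}$, so that $x^\beta\ll x\exp(-c'\sqrt{\log x})$ and the summation over $\gamma$ converges against a factor $(\log x)^{O(1)}$, all of which is absorbed into the stated error together with the Perron truncation term.

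The hard part, and the source of the ineffectivity hidden in $x(A)$, is the possible Siegel zero. For this I would appeal to Siegel's theorem in the form $L(1,\chi)\gg_\epsilon q^{-\epsilon}$, equivalently $\beta_1<1-C(\epsilon)q^{-\epsilon}$ with an ineffective constant $C(\epsilon)>0$. Taking $\epsilon=\epsilon(A)$ small relative to $1/A$ and again using $q<(\log x)^A$, the exceptional term $x^{\beta_1}/\beta_1$ is likewise bounded by $x\exp(-c_1\sqrt{\log x})$ once $x\ge x(A)$; because Siegel's bound is ineffective, the threshold $x(A)$ cannot be made explicit, which is exactly what the statement allows. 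Collecting the main term, the zero sum, the Siegel-zero contribution, and the Perron error, and optimizing the single parameter $T$, then yields the claimed estimate with its indicator $E_0(\chi)$.
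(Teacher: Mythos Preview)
Your sketch is correct and follows the classical Siegel--Walfisz argument via the truncated explicit formula, the standard zero-free region, and Siegel's ineffective bound on the exceptional zero; this is exactly the proof one finds in the reference the paper cites. The paper itself does not supply an independent argument for this lemma: its entire proof reads ``For a proof see for instance \cite{May}'', so your proposal in fact goes well beyond what the paper provides while remaining faithful to the cited source.
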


\begin{proof}
For a proof see for instance \cite{May}.
\end{proof}

\begin{remark}
Lemma \ref{equidistribution} tells us that we will certainly have equidistribution of the primes among the primitive congruence classes if we allow for the modulus of progression  to grow polylogarithmic in size beyond a certain threshold controlled by the choice of the constant $A>0$. 
\end{remark}

\section{\textbf{The lattice characters $\kappa$ in the plane}}
In this section we study the Lattice characters. As is suggestive these characters are defined on integers lattice points. We study some of its properties from analytic and algebraic point of view. These tool will play a crucial role in trapping prime pairs of an idealized gap in a given primitive congruence class.
\bigskip

\begin{definition}(Multiplicative convolution)
Let $(m_1,n_1)$ and $(m_2,n_2)$ be any two integer lattice, then we set \begin{align}(m_1,n_1)\star (m_2,n_2)=(m_1m_2,n_1n_2).\nonumber
\end{align}
\end{definition}

\begin{remark}
Now we launch the lattice character which we deem indispensable for our next studies.
\end{remark}

\begin{definition}\label{convolution}
By the lattice character modulo $q$, we mean the arithmetic function  $\kappa:\bigcup_i\mathcal{N}(a_i,b_i) \longrightarrow \mathbb{C}$ such that $\kappa((m_1,n_1))=\chi(m_1n_1)$, where $\chi$ is the Dirichlet character.
\end{definition}

\subsection{Properties of the lattice character}

\begin{proposition}
The lattice character satisfies the following properties modulo $q$.
\begin{enumerate}
\item [(i)] $\kappa((m,n))=\kappa((n,m))$, (symmetric property).
\bigskip

\item [(ii)] $\kappa((m,n)+(q,q))=\kappa((m,n))$, (Periodicity).
\bigskip

\item [(iii)] $\kappa((m_1,n_1)\star (m_2,n_2))=\kappa((m_1,n_1))\kappa((m_2,n_2))$.
\bigskip

\item [(iv)] $\kappa((m,n))\neq 0$ if and only $\gcd(n,q)=\gcd(m,q)=1$.
\bigskip

\item [(v)] $\kappa((1,1))=1$.
\end{enumerate}
\end{proposition}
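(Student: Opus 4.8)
The plan is to reduce each of the five properties to the corresponding structural facts about the underlying Dirichlet character $\chi$ modulo $q$, via the defining relation $\kappa((m,n)) = \chi(mn)$ from Definition \ref{convolution}. The three facts I would invoke are: $\chi$ is completely multiplicative, so that $\chi(ab) = \chi(a)\chi(b)$ for all integers $a,b$; $\chi$ is periodic modulo $q$, so that $\chi(a) = \chi(b)$ whenever $a \equiv b \pmod q$; and $\chi(a) \neq 0$ precisely when $\gcd(a,q) = 1$, with $\chi(1) = 1$.

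Properties (i), (iii), and (v) would follow at once from multiplicativity and commutativity of integer multiplication. For (i) I would write $\kappa((m,n)) = \chi(mn) = \chi(nm) = \kappa((n,m))$, the middle equality being commutativity in $\mathbb{Z}$. For (v), $\kappa((1,1)) = \chi(1) = 1$ by the normalization of $\chi$. For (iii), unwinding the convolution gives $\kappa((m_1,n_1)\star(m_2,n_2)) = \kappa((m_1 m_2, n_1 n_2)) = \chi(m_1 m_2 n_1 n_2)$, and complete multiplicativity of $\chi$ then lets me regroup this as $\chi(m_1 n_1)\chi(m_2 n_2) = \kappa((m_1,n_1))\kappa((m_2,n_2))$.

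For the periodicity in (ii), I would expand $(m+q)(n+q) = mn + q(m+n+q)$, so that $(m+q)(n+q) \equiv mn \pmod q$; periodicity of $\chi$ then yields $\kappa((m,n)+(q,q)) = \chi((m+q)(n+q)) = \chi(mn) = \kappa((m,n))$. For the non-vanishing in (iv), I would use that $\chi(mn) \neq 0$ if and only if $\gcd(mn,q) = 1$, together with the elementary fact that $\gcd(mn,q) = 1$ holds exactly when both $\gcd(m,q) = 1$ and $\gcd(n,q) = 1$.

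The only step demanding any care is (iii): it relies on the complete multiplicativity of the Dirichlet character, namely $\chi(ab) = \chi(a)\chi(b)$ for all $a,b$ rather than merely for coprime arguments as with a generic multiplicative function. Since Dirichlet characters are in fact completely multiplicative, this is not a genuine obstacle, but it is the one point whose validity is not purely formal. The remaining verifications are direct translations of the defining relation and I expect no difficulty there.
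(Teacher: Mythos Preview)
Your proof is correct and follows essentially the same approach as the paper: both reduce each property to the corresponding fact about the underlying Dirichlet character via the defining relation $\kappa((m,n))=\chi(mn)$. If anything, your write-up is more careful, particularly in flagging that (iii) rests on complete multiplicativity of $\chi$, whereas the paper simply declares (i), (iii), and (v) obvious and spells out only (ii) and (iv).
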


\begin{proof}
The first property is obvious. For the second, we notice that by the properties of the Dirichlet character modulo $q$ we have  $\kappa((m,n)+(q,q))=\kappa((m+q,n+q))=\chi((m+q)(n+q))=\chi(m+q)\chi(n+q)=\chi(m)\chi(n)=\kappa((m,n))$. The third property also follows trivially. Again, we notice that $\kappa((m,n))=\chi(mn)=\chi(m)\chi(n)\neq 0$ if and only if $\gcd(m,q)=\gcd(n,q)=1$. The last property also follows by noting that $\chi(1)=1$.
\end{proof}

\bigskip
The above definition of the lattice character is very natural in some sense. It purports lattice characters are multiplicative. This acclaimed structure of the character will enable us to use tools from analytic number theory in our study.  The profound reduction of the integer lattice and its connection to the classical Dirichlet character makes them very tractable to study. Indeed, we can certainly infer from the analytic and the algebraic properties of Dirichlet characters to the lattice character. These are just elementary properties of the lattice character and there are many more of these properties to study in the following sequel. 

\begin{lemma}\label{countcharacter}
There are $\phi(q)$ lattice characters modulo $q$.
\end{lemma}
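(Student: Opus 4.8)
The plan is to exhibit an explicit bijection between the Dirichlet characters modulo $q$ and the lattice characters modulo $q$, and then invoke the classical fact that there are exactly $\phi(q)$ Dirichlet characters modulo $q$. By Definition \ref{convolution}, every lattice character $\kappa$ arises from some Dirichlet character $\chi$ via the rule $\kappa((m,n))=\chi(mn)$; write $\kappa_\chi$ for the lattice character attached to $\chi$. The association $\chi \mapsto \kappa_\chi$ is manifestly surjective onto the set of lattice characters, since that set is by definition precisely the image of this map. The entire content of the lemma therefore reduces to showing the map is injective.

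To establish injectivity, I would suppose $\kappa_{\chi_1}=\kappa_{\chi_2}$ as functions on $\bigcup_i\mathcal{N}(a_i,b_i)$ and deduce $\chi_1=\chi_2$. The key observation is that for any residue $m$ with $\gcd(m,q)=1$ the pair $(m,1)$ satisfies $\gcd(m,q)=\gcd(1,q)=1$, and hence lies in the domain of the lattice characters. Evaluating the assumed equality at such a point gives $\chi_1(m)=\chi_1(m\cdot 1)=\kappa_{\chi_1}((m,1))=\kappa_{\chi_2}((m,1))=\chi_2(m\cdot 1)=\chi_2(m)$. Since $m$ ranges over all residues coprime to $q$ and both $\chi_i$ vanish off these residues, we conclude $\chi_1=\chi_2$ as Dirichlet characters.

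Combining the two directions, $\chi \mapsto \kappa_\chi$ is a bijection, so the number of lattice characters modulo $q$ equals the number of Dirichlet characters modulo $q$, which is $\phi(q)$.

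The step I expect to require the most care is verifying that the evaluation points used to separate characters genuinely lie in the domain $\bigcup_i\mathcal{N}(a_i,b_i)$; this is where one must check that $(m,1)$ sits in some coset lattice congruence class, which follows from the coprimality conditions built into the definition of $\mathcal{N}$. Once that membership is confirmed, the injectivity argument is routine. An alternative and slightly more robust route would avoid the special point $(m,1)$ by invoking the multiplicativity in property (iii) together with property (v), namely $\kappa((1,1))=1$, to reduce any evaluation back to the one-variable Dirichlet character; but the direct evaluation above is the cleanest.
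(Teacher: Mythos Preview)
Your proof is correct and follows essentially the same approach as the paper: both establish the count by identifying lattice characters with Dirichlet characters via Definition~\ref{convolution}. The paper's proof is a one-line appeal to that definition and the classical count of Dirichlet characters, without spelling out injectivity; your version supplies the missing detail by evaluating at $(m,1)$, which is a perfectly good point of the domain since $\gcd(m,q)=\gcd(1,q)=1$.
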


\begin{proof}
Since there are $\phi(q)$ Dirichlet characters modulo $q$ (see \cite{hildebrand2005introduction}), It follows that there are $\phi(q)$ lattice characters modulo $q$ by virtue of definition \ref{convolution}.
\end{proof}

\begin{proposition}\label{properties}
Let $q$ be fixed and $\gcd(mn,q)=1$, then the following remain valid 
\begin{enumerate}
\item [(i)]\begin{align}\sum \limits_{(m,n)\in \bigcup_i\mathcal{N}(a_i,b_i)}\kappa((m,n))= \begin{cases}\phi(q) \quad \text{if} \quad \gcd(mn,q)=1\\0 \quad \text{otherwise}.\end{cases} \nonumber
\end{align}

\item [(ii)] \begin{align}\sum \limits_{\kappa}\kappa((m,n))= \begin{cases}\phi(q) \quad \text{if} \quad \chi=\chi_0\\0 \quad \text{otherwise}.\end{cases} \nonumber
\end{align}

\item [(iii)]\begin{align}\sum \limits_{(m,n)\in \bigcup_i\mathcal{N}(a_i,b_i)}\overline{\kappa_1((m,n))}\kappa_2((m,n))=\begin{cases}\phi(q) \quad \text{if} \quad \kappa_1=\kappa_2\\0 \quad \text{otherwise}.\end{cases}\nonumber
\end{align}

\item [(iv)] \begin{align}\sum \limits_{\kappa}\kappa((a,b))\overline{\kappa((m,n))}=\begin{cases}\phi(q) \quad \text{if} \quad (m,n)\equiv (a,b)\pmod q\\0 \quad \text{otherwise}.      \end{cases}\nonumber
\end{align}
\end{enumerate}
\end{proposition}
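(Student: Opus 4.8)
The plan is to reduce all four identities to the classical orthogonality relations for Dirichlet characters, exploiting the single structural fact from Definition \ref{convolution} that $\kappa((m,n))=\chi(mn)$ for the Dirichlet character $\chi$ attached to $\kappa$. Two bijections carry out the translation: by Lemma \ref{countcharacter} the $\phi(q)$ lattice characters $\kappa$ are in one-to-one correspondence with the $\phi(q)$ Dirichlet characters $\chi$ modulo $q$, and by Proposition \ref{coset} the map $\mathcal{F}(\mathcal{N}_i)=ab\pmod q$ sends the set of cosets bijectively onto the $\phi(q)$ reduced residues modulo $q$. Throughout I read the sum $\sum_{(m,n)\in\bigcup_i\mathcal{N}(a_i,b_i)}$ as a sum over the $\phi(q)$ cosets, i.e. one representative per class, since this is the only reading that produces the normalization $\phi(q)$ rather than $\phi(q)^2$.

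For the two ``sum over lattice points'' identities (i) and (iii), I would first push the summation through $\mathcal{F}$: as $(m,n)$ runs over a transversal of the cosets, the product $mn$ runs exactly once over every reduced residue $c$ modulo $q$ by Proposition \ref{coset}. Hence $\sum_{(m,n)}\kappa((m,n))=\sum_{\gcd(c,q)=1}\chi(c)$, and the first orthogonality relation, which equals $\phi(q)$ for $\chi=\chi_0$ and $0$ otherwise, gives (i). Likewise $\sum_{(m,n)}\overline{\kappa_1((m,n))}\kappa_2((m,n))=\sum_{c}\overline{\chi_1(c)}\chi_2(c)$, which equals $\phi(q)$ when $\chi_1=\chi_2$ and $0$ otherwise; since the correspondence of Lemma \ref{countcharacter} is injective, $\chi_1=\chi_2$ is equivalent to $\kappa_1=\kappa_2$, giving (iii).

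For the two ``sum over characters'' identities (ii) and (iv), I would fix the lattice point(s) and sum over the $\phi(q)$ lattice characters, replacing each $\kappa$ by its Dirichlet partner via Lemma \ref{countcharacter}. Then $\sum_{\kappa}\kappa((m,n))=\sum_{\chi}\chi(mn)$, and the second orthogonality relation $\sum_{\chi}\chi(k)=\phi(q)$ when $k\equiv 1\pmod q$ and $0$ otherwise, applied with $k=mn$, yields (ii) (so the genuine condition is $mn\equiv 1$, equivalently that $(m,n)$ lies in the principal coset of $(1,1)$). Identity (iv) is the full dual relation: $\sum_{\kappa}\kappa((a,b))\overline{\kappa((m,n))}=\sum_{\chi}\chi(ab)\overline{\chi(mn)}$, which is the standard second orthogonality applied to $k=ab(mn)^{-1}$ and so equals $\phi(q)$ precisely when $ab\equiv mn\pmod q$.

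The hard part will not be any computation but pinning down the correct reading of the vanishing conditions, most delicately in (iv) but also in (i) and (ii). Because $\kappa$ sees only the product $mn$ and is in particular symmetric, $\kappa((m,n))=\kappa((n,m))$, no sum over lattice characters can separate two pairs with the same product; the orthogonality in (iv) can therefore detect membership only up to the equivalence $(m,n)\simeq(a,b)$ of the paper, i.e. up to the coset $\mathcal{N}_q(a,b)$, and \emph{not} componentwise congruence $m\equiv a,\ n\equiv b\pmod q$. Thus the clause ``$(m,n)\equiv(a,b)\pmod q$'' must be interpreted as coset membership $ab\equiv mn\pmod q$, which is exactly the notion of congruence class the paper has chosen to work with; similarly the vanishing clauses of (i) and (ii) are properly $\chi=\chi_0$ and $mn\equiv1$. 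Making the indexing set of each sum (a transversal of cosets versus the full set of admissible lattice points) explicit, so that it matches the claimed $\phi(q)$ on the right-hand side, is the one place where genuine care is required; everything else is a transcription of the classical relations.
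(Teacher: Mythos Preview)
Your approach is exactly the paper's: the paper's proof consists of the single sentence that the proposition ``follows from Definition \ref{convolution} and leveraging the properties of the Classical Dirichlet character,'' i.e.\ precisely the reduction $\kappa((m,n))=\chi(mn)$ together with the two bijections you invoke from Lemma \ref{countcharacter} and Proposition \ref{coset}. Your careful reading of the vanishing clauses---in particular that (iv) detects only the coset condition $ab\equiv mn\pmod q$ rather than componentwise congruence---is in fact sharper than the paper's own statement, and it is the reading actually used downstream in the proof of the main theorem.
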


\begin{proof}
Proposition \ref{properties} follows from Definition \ref{convolution} and leveraging the properties of the Classical Dirichlet character. For details see for instance \cite{hildebrand2005introduction}.
\end{proof}
\bigskip

It is evident from Proposition \ref{properties} that property  (iv) can be recast as \begin{align}\sum \limits_{\kappa}\kappa((a,b))\overline{\kappa((m,n))}&=\sum \limits_{\chi}\chi(ab)\overline{\chi(mn)}\nonumber \\&=\sum \limits_{\chi}\chi(a)\chi(b)\overline{\chi(m)}\overline{\chi(n)}\nonumber \\&=\sum \limits_{\chi}\chi(a)\overline{\chi(m)}\chi(b)\overline{\chi(n)}\nonumber \\&=\sum \limits_{\chi}1\nonumber
\end{align}under the congruence condition $(a,b)\equiv (m,n)\pmod q$, which is equivalent to the two congruence conditions\begin{align}m\equiv a \pmod q \quad \mathrm{and} \quad n\equiv b\pmod q\nonumber
\end{align} with the underlying condition $\gcd(mn,q)=1$. By Lemma \ref{countcharacter}, property $(iv)$ is justified.

\begin{remark}
The last property of Theorem \ref{properties} is actually the main tool and certainly the beef of the problem at hand. We leverage this orthogonality principle to trap prime pairs in a given congruence class in the following sequel.
\end{remark}
\bigskip

\section{\textbf{Main theorem}}
In this section we give a proof of the main result of this paper. We assemble the tools we have laid down to establish the main result of this paper.

\begin{theorem}For some constant $c>0$ and for any $A>0$, there exist some $x(A)>0$  such that If $q\leq (\log x)^{A}$ then \begin{align}\Psi_z(x;\mathcal{N}_q(a,b),q)&=\frac{\Theta (z)}{2\phi(q)}x+O\bigg(\frac{x}{e^{c\sqrt{\log x}}}\bigg)\nonumber
\end{align}for $x\geq x(A)$.  In particular for $q\leq (\log x)^{A}$ for any $A>0$\begin{align}\Psi_z(x;\mathcal{N}_q(a,b),q)\sim \frac{x\mathcal{D}(z)}{2\phi(q)}\nonumber
\end{align}for some constant $\mathcal{D}(z)>0$ and where $\phi(q)=\# \{(a,b):(p_i,p_{i+z})\in \mathcal{N}_q(a,b)\}$. 
\end{theorem}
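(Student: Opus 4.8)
The plan is to detect the coset condition with the orthogonality of lattice characters and then reduce the resulting shifted correlation to a bilinear sum that the Siegel--Walfisz estimate can control. Writing the weighted count as
\[\Psi_z(x;\mathcal{N}_q(a,b),q)=\sum_{n\leq x}\Lambda(n)\Lambda(n+z)\,\mathbf{1}\big[(n,n+z)\in\mathcal{N}_q(a,b)\big],\]
I would first invoke property (iv) of Proposition \ref{properties}, which expresses the indicator as $\frac{1}{\phi(q)}\sum_{\kappa}\overline{\kappa((a,b))}\,\kappa((n,n+z))=\frac{1}{\phi(q)}\sum_{\chi}\overline{\chi(ab)}\,\chi(n)\chi(n+z)$. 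Interchanging summation yields
\[\Psi_z(x;\mathcal{N}_q(a,b),q)=\frac{1}{\phi(q)}\sum_{\chi}\overline{\chi(ab)}\,S_z(\chi),\qquad S_z(\chi):=\sum_{n\leq x}\Lambda(n)\chi(n)\,\Lambda(n+z)\chi(n+z).\]
The structural point that drives everything is that $\overline{\chi_0(ab)}=1$ for every admissible representative $(a,b)$, so the principal-character term is independent of the class; this is exactly what produces equidistribution, with the factor $\frac{1}{\phi(q)}$ emerging automatically from the orthogonality normalisation.

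Next I would extract the main term from $S_z(\chi_0)$ and relegate the remaining characters to the error. Applying the area-method decomposition of Corollary \ref{decomposition} (and its one-shift refinement, Lemma \ref{key 2}) with $f(n)=\Lambda(n)\chi(n)$ rewrites each correlation through the bilinear sum $\sum_{2\leq n\leq x}\Lambda(n)\chi(n)\,\Psi(n-1,\chi)$, where $\Psi(t,\chi)=\sum_{m\leq t}\Lambda(m)\chi(m)$. For $\chi=\chi_0$, Lemma \ref{equidistribution} gives $\Psi(t,\chi_0)=t+O(t\,e^{-c_1\sqrt{\log t}})$, and partial summation against $\Lambda(n)$ produces a main term of size $\tfrac12\Theta(z)\,x$ (the $\tfrac12$ being the half-plane normalisation already visible in Lemma \ref{starter} and Theorem \ref{actual count}, and $\Theta(z)$ the shift-dependent constant furnished by the area method). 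For $\chi\neq\chi_0$, the same lemma gives $\Psi(t,\chi)\ll t\,e^{-c_1\sqrt{\log t}}$, so partial summation renders each $S_z(\chi)$ of size $O\big(x\,e^{-c\sqrt{\log x}}\big)$; summing over the at most $\phi(q)\leq(\log x)^A$ characters and absorbing the polylogarithmic factor into a marginally smaller $c$ keeps the aggregate error inside $O(x/e^{c\sqrt{\log x}})$. This is precisely the step where the hypothesis $q\leq(\log x)^A$ is spent.

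The hard part is the passage from the bilinear sum back to the genuine single-shift correlation $S_z(\chi)$. Corollary \ref{decomposition} is an identity averaged over \emph{all} shifts $j$, while Lemma \ref{key 2} recovers the single shift $z$ only as a one-sided lower bound and only for nonnegative real $f$; for a non-principal $\chi$ the summand $\Lambda(n)\chi(n)$ is complex and that positivity fails. Upgrading the area method from an inequality to an asymptotic identity for a fixed shift is genuinely the crux, and in full strength it is Hardy--Littlewood territory. The route I would actually take is to keep the principal character inside the positive regime of Lemma \ref{key 2}, pair the resulting lower bound with the Brun-type upper bound of Theorem \ref{brun} (which also fixes the order and pins $\Theta(z)$ up to the implied constants), and treat every non-principal character purely as a Siegel--Walfisz error, accepting that $\Theta(z)$ is characterised through the order of growth rather than evaluated in closed form. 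Collecting the principal main term $\tfrac{\Theta(z)}{2\phi(q)}x$ with the aggregated error then gives the stated formula, and the asymptotic $\Psi_z\sim\tfrac{x\mathcal{D}(z)}{2\phi(q)}$ follows on setting $\mathcal{D}(z)=\Theta(z)$ and letting $x\to\infty$.
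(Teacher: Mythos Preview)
Your outline is essentially the paper's proof: detect the coset condition via the orthogonality relation (iv) of Proposition~\ref{properties}, expand
\[
\Psi_z(x;\mathcal{N}_q(a,b),q)=\frac{1}{\phi(q)}\sum_{\chi}\overline{\chi(ab)}\sum_{n\le x}\Lambda\chi(n)\,\Lambda\chi(n+z),
\]
convert each inner correlation by the area method into the bilinear sum $\sum_{2\le n\le x}\Lambda\chi(n)\,\Psi(n-1,\chi)$, and evaluate that with Lemma~\ref{equidistribution} and partial summation, the principal character giving the main term $\tfrac{\Theta(z)}{2\phi(q)}x$ and the $\phi(q)\le(\log x)^A$ non-principal characters absorbed into $O(x\,e^{-c\sqrt{\log x}})$.

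The one place you diverge is precisely the step you flag as delicate. The paper does \emph{not} split into a principal regime handled by positivity and a non-principal regime handled separately; it applies the area method uniformly, asserting for every character at once, with a single $\Theta(z)>0$ pulled outside the $\chi$-sum, that
\[
\sum_{n\le x}\Lambda\chi(n)\,\Lambda\chi(n+z)=\frac{\Theta(z)}{x}\sum_{2\le n\le x}\Lambda\chi(n)\sum_{m\le n-1}\Lambda\chi(m)+O_{z,q}(1),
\]
and then plugs in Siegel--Walfisz. The concern you raise---that Lemma~\ref{key 2} is a one-sided lower bound valid only for nonnegative real $f$, so does not straightforwardly yield such an asymptotic identity for complex $\Lambda\chi$---is not addressed in the paper's proof; the passage is simply declared ``feasible by the area method.'' Your reconstruction of the argument is faithful, and your identification of the crux is accurate.
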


\begin{proof}
For $\gcd(ab,q)=1$, let us consider the following sums \begin{align}\Psi_z(x;\mathcal{N}_q(a,b),q)=\sum \limits_{\substack{n\leq x\\(n,n+z)\in \mathcal{N}_q(a,b)}}\Lambda(n)\Lambda(n+z)\nonumber
\end{align}and \begin{align}\Psi_z(x,\kappa)&=\sum \limits_{n\leq x}\Lambda(n)\Lambda(n+z)\kappa((n,n+z)).\nonumber
\end{align}By an application of the orthogonality principle in Theorem \ref{properties}, we can write\begin{align}1=\frac{1}{\phi(q)}\sum \limits_{\kappa}\kappa((n,n+z))\overline{\kappa((n,n+z))}.\label{beef}
\end{align}From \ref{beef}, we can write\begin{align}\sum \limits_{\substack{n\leq x\\(n,n+z)\in \mathcal{N}_q(a,b)}}\Lambda(n)\Lambda(n+z)&=\frac{1}{\phi(q)}\sum \limits_{\kappa}\overline{\kappa((a,b))}\sum \limits_{n\leq x}\Lambda(n)\Lambda(n+z)\kappa((n,n+z))\nonumber \\&=\frac{1}{\phi(q)}\sum \limits_{\chi}\overline{\chi(a)}\overline{\chi(b)}\Psi_z(x,\kappa)\nonumber \\&=\frac{1}{\phi(q)}\sum \limits_{\chi}\overline{\chi(a)\chi(b)}\sum \limits_{n\leq x}\Lambda\cdot \chi(n)\Lambda \cdot \chi(n+z)\nonumber \\&=\frac{\Theta (z)}{\phi(q)x}\sum \limits_{\chi}\overline{\chi(a)}\overline{\chi(b)}\sum \limits_{2\leq n\leq x}\Lambda\cdot \chi(n)\sum \limits_{m\leq n-1}\Lambda \cdot \chi(m)+O_{z,q}(1)\label{important}
\end{align}which is feasible by the area method and for some $\Theta(z)>0$. This reduces the problem to obtaining very good estimates for the sum \begin{align}\Psi(x,\chi)&=\sum \limits_{n\leq x}\Lambda(n)\chi(n).\nonumber
\end{align}By letting $q\leq (\log x)^{A}$ for any $A>0$ and appealing to Lemma \ref{equidistribution}, then we have \begin{align}\sum \limits_{2\leq n\leq x}\Lambda\cdot \chi(n)\sum \limits_{m\leq n-1}\Lambda \cdot \chi(m)&=E_0(\chi)\frac{x^2}{2}+O\bigg(\frac{x^2}{e^{c\sqrt{\log x}}}\bigg)\nonumber
\end{align}by an application of partial summation. By Plugging this estimate into \ref{important}, It follows that \begin{align}\sum \limits_{\substack{n\leq x\\(n,n+z)\in \mathcal{N}_q(a,b)}}\Lambda(n)\Lambda(n+z)&=\frac{\Theta (z)}{\phi(q)x}\sum \limits_{\chi}\overline{\chi(a)}\overline{\chi(b)}\bigg(E_0(\chi)\frac{x^2}{2}+O\bigg(\frac{x^2}{e^{c\sqrt{\log x}}}\bigg )\bigg)+O_{z,q}(1)\nonumber
\end{align}and the result follows immediately.
\end{proof}
\bigskip

The main result in this paper establishes an analogue of equidistribution of primes in arithmetic progression so long as we allow the modulus of our progression to grow polylogarithmic in size. Using a similar argument, It can be shown using the method, that under the same assumption of the main theorem \begin{theorem}\begin{align}\pi_z(x;\mathcal{N}_q(a,b),q)&=\frac{\Theta (z)}{2\phi(q)}\frac{x}{\log^2x}+O\bigg(\frac{x}{e^{c\sqrt{\log x}}}\bigg )\nonumber
\end{align}In particular \begin{align}\pi_z(x;\mathcal{N}_q(a,b),q)\sim \frac{\Theta(z)}{2\phi(q)}\frac{x}{\log^2x}\nonumber
\end{align}for $q\leq (\log x)^A$ for any $A>0$.
\end{theorem}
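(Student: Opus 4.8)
The plan is to deduce this prime--pair count directly from the already established asymptotic for the von Mangoldt weighted sum $\Psi_z(x;\mathcal{N}_q(a,b),q)$ by a single application of partial summation, exactly mirroring the classical passage from $\psi(x)$ to $\pi(x)$. First I would strip the prime powers out of $\Psi_z$: in the sum $\sum_{n\le x,\,(n,n+z)\in\mathcal{N}_q(a,b)}\Lambda(n)\Lambda(n+z)$ the only nonzero terms for which $n$ or $n+z$ fails to be prime are those in which one of them is a proper prime power, and since there are $O(\sqrt{x})$ such prime powers up to $x+z$, each contributing $O(\log^2 x)$, their total is $O(\sqrt{x}\log^2 x)$, which is absorbed into the error $O(xe^{-c\sqrt{\log x}})$. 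Writing
\[
\Theta_z(t):=\sum_{\substack{p\le t,\ p+z\ \mathrm{prime}\\ (p,p+z)\in\mathcal{N}_q(a,b)}}(\log p)\log(p+z),
\]
the main theorem of the preceding section therefore gives $\Theta_z(x)=\frac{\Theta(z)}{2\phi(q)}x+O\big(xe^{-c\sqrt{\log x}}\big)$.

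Next I would recover $\pi_z$ from $\Theta_z$ by Abel summation against the smooth weight $f(t)=\big((\log t)\log(t+z)\big)^{-1}$, using that $\pi_z(x;\mathcal{N}_q(a,b),q)$ is exactly $\sum f(p)\,(\log p)\log(p+z)$ taken over the relevant prime pairs $p\le x$. Partial summation then yields
\[
\pi_z(x;\mathcal{N}_q(a,b),q)=\Theta_z(x)f(x)-\int_{2}^{x}\Theta_z(t)f'(t)\,dt .
\]
The boundary term produces the asserted main term: since $f(x)=\frac{1}{\log^2 x}\big(1+O(1/\log x)\big)$ for fixed $z$, one gets $\Theta_z(x)f(x)=\frac{\Theta(z)}{2\phi(q)}\frac{x}{\log^2 x}+O\big(\tfrac{x}{\log^2 x}e^{-c\sqrt{\log x}}\big)$, and the leading constant $\Theta(z)$ is preserved exactly.

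It then remains to show the integral is of lower order and contributes an acceptable error, and this is the main obstacle. Here I would use $f'(t)\ll (t\log^3 t)^{-1}$ together with $\Theta_z(t)=\frac{\Theta(z)}{2\phi(q)}t+O(te^{-c\sqrt{\log t}})$: the main part of $\Theta_z$ contributes $\ll \int_2^x \log^{-3}t\,dt\ll x/\log^3 x$, smaller than the main term by a factor $\log x$, while the error part contributes $\ll \int_2^x e^{-c\sqrt{\log t}}\,dt$. I would dispatch this last integral by splitting the range at $\sqrt x$, bounding the lower part trivially by $\sqrt x$ and the upper part by $x\,e^{-c\sqrt{(\log x)/2}}$, which gives $\int_2^x e^{-c\sqrt{\log t}}\,dt\ll x\,e^{-c'\sqrt{\log x}}$ for some $c'\in(0,c)$. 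Assembling the three pieces yields the claimed estimate, and the asymptotic equivalence follows at once. As an alternative route one may instead repeat the character decomposition of the preceding theorem verbatim, merely replacing the Siegel--Walfisz input of Lemma \ref{equidistribution} (stated for $\psi(x,\chi)$) by its prime--counting counterpart $\pi(x,\chi)=E_0(\chi)\,\mathrm{Li}(x)+O(xe^{-c\sqrt{\log x}})$, which feeds the extra factor $\log^{-2}x$ in through $\mathrm{Li}(x)$ rather than through partial summation and delivers the same conclusion.
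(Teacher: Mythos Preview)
The paper does not give a detailed proof here; it only remarks that the result ``can be shown using the method'' of the preceding theorem, i.e.\ by rerunning the character decomposition and the area-method identity with the unweighted prime indicator in place of $\Lambda$ and feeding in the prime-counting form of Siegel--Walfisz. That is precisely the alternative route you sketch in your final sentence, so on that score you are aligned with the paper.

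Your primary route via partial summation from $\Psi_z$, however, contains a genuine gap. You correctly obtain a contribution of size $\asymp x/\log^{3}x$ from the main part of $\Theta_z$ in the integral $\int_{2}^{x}\Theta_z(t)f'(t)\,dt$, and you note that this is smaller than the main term by a factor of $\log x$. But $x/\log^{3}x$ is \emph{not} $O\big(xe^{-c\sqrt{\log x}}\big)$: since $e^{c\sqrt{\log x}}$ eventually exceeds every fixed power of $\log x$, one has $xe^{-c\sqrt{\log x}}=o\big(x/\log^{3}x\big)$, so this integral contribution dominates the claimed error rather than being absorbed by it. Partial summation from an asymptotic $\Theta_z(x)=Cx+O(xe^{-c\sqrt{\log x}})$ can only deliver $\pi_z(x)=Cx/\log^{2}x+O(x/\log^{3}x)$; to retain an error of $O(xe^{-c\sqrt{\log x}})$ one must replace the main term $x/\log^{2}x$ by a two-fold logarithmic integral (the analogue of $\mathrm{Li}(x)$) and keep the full asymptotic expansion. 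Your argument therefore establishes the asymptotic $\pi_z\sim \tfrac{\Theta(z)}{2\phi(q)}\,x/\log^{2}x$ but not the displayed error term as written. (The paper's own ``similar argument'' would run into the same obstruction, so this is as much a defect of the stated error term as of your proof.)
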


\footnote{
\par
.}%
.



\bibliographystyle{amsplain}

\end{document}